\newtheorem{theorem}{Theorem}
\newtheorem{corollary}[theorem]{Corollary}
\newtheorem{lemma}[theorem]{Lemma}
\newtheorem{proposition}[theorem]{Proposition}
\theoremstyle{definition}
\newtheorem{example}[theorem]{Example}
\DeclareMathOperator{\Gal}{Gal}
\DeclareMathOperator{\lcm}{lcm}
\DeclareMathOperator{\N}{N}
\DeclareMathOperator{\Tr}{Tr}
\title{Characterizations of a Class of Planar Functions over Finite Fields}
\author{Ruikai Chen\textsuperscript{1,2}\and Sihem Mesnager\textsuperscript{1,2,3}}
\date{\small\textsuperscript{1}Department of Mathematics, University of Paris VIII, F-93526 Saint-Denis\\\textsuperscript{2}Laboratory Analysis, Geometry and Applications, LAGA, University Sorbonne Paris Nord, CNRS, UMR 7539, F-93430, Villetaneuse, France\\\textsuperscript{3}Telecom Paris, Polytechnic institute of Paris, 91120 Palaiseau, France\\Emails: \href{mailto:chen.rk@outlook.com}{chen.rk@outlook.com}\quad\href{mailto:smesnager@univ-paris8.fr}{smesnager@univ-paris8.fr}}
\begin{document}

\maketitle

\noindent\textbf{Abstract.} Planar functions, introduced by Dembowski and Ostrom, have attracted much attention in the last decade. As shown in this paper, we present a new class of planar functions of the form $\operatorname{Tr}(ax^{q+1})+\ell(x^2)$ on an extension of the finite field $\mathbb F_{q^n}/\mathbb F_q$. Specifically, we investigate those functions on $\mathbb F_{q^2}/\mathbb F_q$ and construct several typical kinds of planar functions. We also completely characterize them on $\mathbb F_{q^3}/\mathbb F_q$. When the degree of extension is higher, it will be proved that such planar functions do not exist given certain conditions.\\
\textbf{Keywords.} planar function, finite field, extension field, polynomial, linearized polynomial.

\section{Introduction}

Consider an extension $\mathbb F_{q^n}/\mathbb F_q$ of a finite field of odd characteristic. The function induced by a polynomial $f$ over $\mathbb F_{q^n}$ is called a planar function if $f(x+c)-f(x)$ is a permutation on $\mathbb F_{q^n}$ for every $c\in\mathbb F_{q^n}^*$. The notion of planar functions, originally introduced by Dembowski and Ostrom (\cite{dembowski1968planes}), coincides with that of so-called perfect nonlinear functions (see, e.g., \cite{pott2016almost} and \cite{zieve2015planar}) in the case of odd characteristic. Planar functions occur in various areas of mathematics with vast applications in coding theory, cryptography, combinatorics, etc. For instance, planar functions prove helpful in DES-like cryptosystems (\cite{nyberg1992provable}), in error-correcting codes (\cite{carlet2005linear,ding2009explicit,yuan2006weight}), and in signal sets (\cite{ding2007signal}). Besides, planar functions induce many combinatorial objects such as skew Hadamard difference sets, Paley type partial difference sets (\cite{weng2007pseudo}), relative difference sets (\cite{ganley1975relative}) and symplectic spreads (\cite{abdukhalikov2015symplectic}).

Despite the significance of these functions in theory and applications, not many constructions of planar functions have been discovered. Some important planar functions on $\mathbb F_{q^n}$ are listed as follows:
\begin{itemize}
\item $x^{p^k+1}$ with $\frac n{\gcd(k,n)}$ odd (\cite{dembowski1968planes});
\item $x^{10}+x^6-x^2$ with $q=3$ and either $n=2$ or $n$ odd (\cite{coulter1997planar});
\item $x^{\frac{3^k+1}2}$ with $q=3$, $\gcd(k,n)=1$ and $k$ odd (\cite{coulter1997planar});
\item $x^{10}-ux^6-u^2x^2$ with $u\in\mathbb F_{q^n}$, $q=3$ and $n$ odd (\cite{ding2006family}).
\end{itemize}
For other constructions of planar functions and related problems, see \cite{coulter2008commutative,zha2009perfect,bartoli2022planar,bergman2022classifying} and those references therein.

To extend the investigation of planar functions, we introduce a new class of planar functions of the form $\Tr(ax^{q+1})+\ell(x^2)$, where $a\in\mathbb F_{q^n}^*$, $\Tr$ is the trace function from $\mathbb F_{q^n}$ to $\mathbb F_q$, and $\ell$ is an arbitrary linearized polynomial over $\mathbb F_{q^n}$ (a polynomial over $\mathbb F_{q^n}$ that induces a linear endomorphism of $\mathbb F_{q^n}$ over its prime field). In this paper, we study planar functions mentioned above according to the degree $n$ and try our utmost to characterize them. In Section \ref{New-class}, we focus on the case $n=2$, where many instances of planar functions exist, and also the case $n=3$, where all those planar functions are characterized explicitly. Next, Section \ref{Non-existence-Results} shows that as the degree $n$ gets higher, those planar functions tend not to exist.

The following notation will be used throughout. For the finite field $\mathbb F_{q^n}$ with odd characteristic $p$, let $\Tr$ and $\N$ denote the trace and the norm function from $\mathbb F_{q^n}$ to $\mathbb F_q$ respectively. Consider the polynomial ring $\mathbb F_{q^n}[x]$ with an indeterminate $x$. When dealing with functions on $\mathbb F_{q^n}$, only polynomials of degree less than $q^n$ are discussed, all of which are treated modulo $x^{q^n}-x$. 

\section{The New Class of Planar Functions}\label{New-class}

Let $f(x)=\Tr(ax^{q+1})+\ell(x^2)$ for $a\in\mathbb F_{q^n}^*$ with a linearized polynomial $\ell$ over $\mathbb F_{q^n}$. Then $f$ is a planar function on $\mathbb F_{q^n}$ if and only if for all $v\in\mathbb F_{q^n}^*$,
\[f(x+v)-f(x)-f(v)=\Tr(avx^q+av^qx)+2\ell(vx)\]
induces a permutation of $\mathbb F_{q^n}$; i.e., it has no root in $\mathbb F_{q^n}^*$. By substituting $uv^{-1}$ for $x$ with $u\in\mathbb F_{q^n}^*$, we obtain that the function $f$ is planar if and only if
\[\Tr(au^qv^{1-q}+auv^{q-1})+2\ell(u)\ne0\]
for all $u,v\in\mathbb F_{q^n}^*$. As $\Tr(au^qv^{1-q}+auv^{q-1})\in\mathbb F_q$, only those $u\in\mathbb F_{q^n}^*$ with $\ell(u)\in\mathbb F_q$ need to be considered. It will be seen that the problem varies for different degrees $n$.

\subsection{Planar Functions on Quadratic Extensions}\label{planar-quadratic-Extensions}

Let $n=2$, and $f(x)=\Tr(a)x^{q+1}+\ell(x^2)$. If $\Tr(a)=0$, then, by definition, it is planar if and only if $\ell$ is a permutation polynomial of $\mathbb F_{q^2}$. Hence, we only consider $f(x)=x^{q+1}+\ell(x^2)$. The function is planar if and only if
\[\Tr(uv^{q-1})+2\ell(u)\ne0\]
for all $u,v\in\mathbb F_{q^2}^*$. Fix some $u\in\mathbb F_{q^2}^*$ such that $\ell(u)\in\mathbb F_q$. If $\Tr(uv^{q-1})+2\ell(u)=0$ for some $v\in\mathbb F_{q^2}^*$, then we have an element $\alpha=uv^{q-1}$ with $\N(\alpha)=\N(u)$ and $\Tr(\alpha)=-2\ell(u)$, so that the polynomial
\[x^2+2\ell(u)x+\N(u)=x^2-\Tr(\alpha)x+\N(\alpha)\]
has a multiple root $\alpha$ in $\mathbb F_q$, or is irreducible over $\mathbb F_q$. The converse is obvious by the same argument, since an element $\alpha\in\mathbb F_{q^2}$ with $\N(\alpha)=\N(u)$ can be written as $\alpha=uv^{q-1}$ for some $v\in\mathbb F_{q^2}^*$. Hence it follows from the discriminant of the polynomial $x^2+2\ell(u)x+\N(u)$ that $\Tr(uv^{q-1})+2\ell(u)\ne0$ for all $v\in\mathbb F_{q^2}^*$ if and only if $\ell(u)^2-\N(u)$ is a nonzero square in $\mathbb F_q$. This proves the following.

\begin{proposition}\label{1}
The function $x^{q+1}+\ell(x^2)$ is planar on $\mathbb F_{q^2}$ if and only if $\ell(u)^2-\N(u)$ is a nonzero square in $\mathbb F_q$ for all $u\in\mathbb F_{q^2}^*$ such that $\ell(u)\in\mathbb F_q$.
\end{proposition}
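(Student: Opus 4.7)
The plan is to translate the criterion derived immediately above, namely that $x^{q+1}+\ell(x^2)$ is planar on $\mathbb{F}_{q^2}$ if and only if $\Tr(uv^{q-1})+2\ell(u)\ne 0$ for all $u,v\in\mathbb{F}_{q^2}^*$, into a condition on a discriminant in $\mathbb{F}_q$. Because $\Tr(uv^{q-1})\in\mathbb{F}_q$ and the characteristic is odd, only those $u\in\mathbb{F}_{q^2}^*$ with $\ell(u)\in\mathbb{F}_q$ need to be controlled; for every other $u$ the expression cannot lie in $\mathbb{F}_q$ at all, so it is automatically nonzero.

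Fixing such a $u$, I would change variables via $\alpha=uv^{q-1}$. By Hilbert~90 the map $v\mapsto v^{q-1}$ surjects $\mathbb{F}_{q^2}^*$ onto the norm-one subgroup, so as $v$ ranges over $\mathbb{F}_{q^2}^*$ the element $\alpha$ ranges over all elements of $\mathbb{F}_{q^2}^*$ satisfying $\N(\alpha)=\N(u)$. The non-planarity condition for this $u$ then becomes: there exists $\alpha\in\mathbb{F}_{q^2}$ with $\N(\alpha)=\N(u)$ and $\Tr(\alpha)=-2\ell(u)$.

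Any such $\alpha$ is a root of the monic polynomial $P(x)=x^{2}+2\ell(u)x+\N(u)=x^{2}-\Tr(\alpha)x+\N(\alpha)$ over $\mathbb{F}_q$. I would analyze the three possibilities for $P$: irreducible over $\mathbb{F}_q$; a square of a linear factor; or a product of two distinct linear factors. In the first two cases such an $\alpha$ exists (either as one of the conjugate roots lying in $\mathbb{F}_{q^2}\setminus\mathbb{F}_q$, or as the unique double root in $\mathbb{F}_q$, which does satisfy $\N(\alpha)=\alpha^{2}=\N(u)$). In the third case, for any root $\alpha\in\mathbb{F}_q$ one has $\N(\alpha)=\alpha^{2}$, and combining with Vieta's formula $\alpha\beta=\N(u)$ forces $\alpha=\beta$, contradicting distinctness. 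Hence a witness $\alpha$ exists precisely when the discriminant $4(\ell(u)^{2}-\N(u))$ is either zero or a nonsquare in $\mathbb{F}_q^*$.

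Negating this yields the statement: planarity is equivalent to $\ell(u)^{2}-\N(u)$ being a nonzero square in $\mathbb{F}_q$ for every $u$ with $\ell(u)\in\mathbb{F}_q$. The main subtlety to spell out carefully is the Vieta step that rules out the split-but-distinct $\mathbb{F}_q$-factorization of $P$; once that case is excluded, the discriminant criterion completes the proof immediately.
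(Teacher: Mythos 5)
Your proposal is correct and follows essentially the same route as the paper: reduce to the condition $\Tr(uv^{q-1})+2\ell(u)\ne 0$, reparametrize via $\alpha=uv^{q-1}$ ranging over the norm fiber $\N(\alpha)=\N(u)$, and read off the answer from the factorization type (equivalently the discriminant) of $x^2+2\ell(u)x+\N(u)$. The only difference is that you spell out the split-with-distinct-roots case via Vieta, which the paper leaves implicit; this is a harmless elaboration, not a new argument.
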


For such linearized polynomial $\ell$ over $\mathbb F_{q^2}$, we are interested in those elements $u\in\mathbb F_{q^2}^*$ such that $\ell(u)\in\mathbb F_q$, which form a vector subspace of $\mathbb F_{q^2}$ containing the kernel of $\ell$. Before showing the properties of this subspace, we present a result that will be helpful afterwards. 

\begin{lemma}\label{subspace}
Every $k$-dimensional vector subspace of $\mathbb F_{p^m}$ over $\mathbb F_p$ is equal to the image of some linearized polynomial over $\mathbb F_{p^m}$ of degree $p^{m-k}$.
\end{lemma}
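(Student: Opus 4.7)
The plan is to produce $V$ as the image of a subspace polynomial. For each $(m-k)$-dimensional $\mathbb F_p$-subspace $U\subset\mathbb F_{p^m}$, I would form $L_U(x):=\prod_{u\in U}(x-u)$; because $U$ is additively closed, a standard argument makes $L_U$ a monic linearized polynomial of degree $p^{m-k}$ whose kernel (as an $\mathbb F_p$-endomorphism of $\mathbb F_{p^m}$) is exactly $U$, and therefore whose image $W_U:=L_U(\mathbb F_{p^m})$ is an $\mathbb F_p$-subspace of dimension $k$.

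The bulk of the proof would be to show that the assignment $U\mapsto W_U$ is a bijection from $(m-k)$- to $k$-dimensional $\mathbb F_p$-subspaces of $\mathbb F_{p^m}$. Since both sides have the same (Gaussian binomial) cardinality, it is enough to establish injectivity; the preimage of the given $V$ then supplies the required polynomial of degree $p^{m-k}$ with image $V$.

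For injectivity I would use a degree-drop trick. If $W_{U_1}=W_{U_2}=W$, the monic leading terms of $L_{U_1}$ and $L_{U_2}$ cancel in $M:=L_{U_1}-L_{U_2}$, so $M$ is a linearized polynomial of degree at most $p^{m-k-1}$. Were $M\ne 0$, a root count would give $\dim_{\mathbb F_p}\ker M\le m-k-1$, so rank--nullity forces $\dim M(\mathbb F_{p^m})\ge k+1$; but $M(\mathbb F_{p^m})\subseteq W$ has dimension $k$, a contradiction. Hence $M=0$, so $U_1=\ker L_{U_1}=\ker L_{U_2}=U_2$.

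The only non-routine step I foresee is spotting the degree-drop argument, which converts a mismatch of leading coefficients into a dimension conflict inside $W$. The supporting facts---that $L_U$ is a linearized polynomial of the claimed form with kernel $U$, that the numbers of $(m-k)$- and $k$-dimensional $\mathbb F_p$-subspaces of $\mathbb F_{p^m}$ agree, and the rank--nullity calculation---are all standard.
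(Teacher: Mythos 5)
Your proof is correct, and its skeleton matches the paper's: both take the subspace polynomial $L_U(x)=\prod_{u\in U}(x-u)$ of an $(m-k)$-dimensional subspace $U$, reduce the claim to injectivity of the assignment $U\mapsto L_U(\mathbb F_{p^m})$, and finish by the symmetry of the count of $k$- and $(m-k)$-dimensional subspaces. Where you genuinely diverge is the injectivity step. The paper composes $L_U=g$ with the subspace polynomial $h$ of the image $W$, observes that $h(g(x))=x^{p^m}-x$ identically (a degree-$p^m$ linearized polynomial vanishing on all of $\mathbb F_{p^m}$), and then recovers the coefficients of $g$ from those of $h$ one at a time by induction; this pins down $g$, and hence $U$ as its root set, from $W$ alone. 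You instead subtract two candidates $M=L_{U_1}-L_{U_2}$ with the same image, note that the monic leading terms cancel so $M$ is a linearized polynomial of degree at most $p^{m-k-1}$, and derive a contradiction from rank--nullity: a nonzero $M$ would have image of dimension at least $k+1$ sitting inside the $k$-dimensional $W$. All the supporting facts you invoke (that $L_U$ is linearized with kernel exactly $U$, that $M(\mathbb F_{p^m})\subseteq W$ because $W$ is a subspace, the root count) are sound, so your argument is complete. Your degree-drop version is arguably cleaner and avoids the paper's somewhat delicate coefficient recursion; the paper's version has the minor side benefit of exhibiting the explicit correspondence between $h$ and $g$.
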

\begin{proof}
Let $V$ be a subspace of dimension $m-k$ and $W$ an associated subspace defined as the image of the linear endomorphism on $\mathbb F_{p^m}$ induced by
\[g(x)=\prod_{\xi\in V}(x-\xi)=x^{p^{m-k}}+\alpha_{m-k-1}x^{p^{m-k-1}}+\dots+\alpha_0x.\]
Fix $W$ and let
\[h(x)=\prod_{\xi\in W}(x-\xi)=x^{p^k}+\beta_{k-1}x^{p^{k-1}}+\dots+\beta_0x.\]
It follows that the composition of $h$ and $g$ is the zero map and has degree $p^m$ as a polynomial. Then $h(g(x))=x^{p^m}-x$ formally. By comparing the coefficients,
\[\alpha_{m-k-1}^{p^k}+\beta_{k-1}=0,\qquad \alpha_{m-k-2}^{p^k}+\beta_{k-1}\alpha_{m-k-1}^{p^{k-1}}+\beta_{k-2}=0,\]
and proceeding by induction we obtain that $g$ is uniquely determined by $h$. Therefore, there is a unique subspace of dimension $m-k$ corresponding to $W$ in the above way. This holds for arbitrary $W$ of dimension $k$, since the number of vector subspaces of dimension $k$ of $\mathbb F_{q^m}$ over $\mathbb F_p$ is equal to that of dimension $m-k$.
\end{proof}

\begin{proposition}\label{kernel}
Suppose the function $x^{q+1}+\ell(x^2)$ on $\mathbb F_{q^2}$ is planar. Then $\ell$ has at most $q$ roots in $\mathbb F_{q^2}$. Moreover, $\ell$ has exactly $q$ roots in $\mathbb F_{q^2}$ if $\ell(\mathbb F_{q^2})\cap\mathbb F_q=\{0\}$.
\end{proposition}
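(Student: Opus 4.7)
The plan is to recast the planar constraint from Proposition~\ref{1} as a quadratic-character condition on $\ker(\ell)$ and then bound its size by a dimension count against an $\mathbb{F}_q$-line in the opposite square-coset. By Proposition~\ref{1} applied to $u\in\ker(\ell)\setminus\{0\}$ (for which $\ell(u)=0\in\mathbb{F}_q$), planarity forces $-\N(u)$ to be a nonzero square in $\mathbb{F}_q$. Writing $\chi_q$ and $\chi_{q^2}$ for the quadratic characters of $\mathbb{F}_q^*$ and $\mathbb{F}_{q^2}^*$, the identity
\[
\chi_{q^2}(u)=u^{(q^2-1)/2}=\bigl(u^{q+1}\bigr)^{(q-1)/2}=\chi_q(\N(u))
\]
rewrites the condition as $\chi_{q^2}(u)=\chi_q(-1)$ for every $u\in\ker(\ell)\setminus\{0\}$; thus $\ker(\ell)\setminus\{0\}$ lies in a single coset of $\mathbb{F}_{q^2}^{*2}$ in $\mathbb{F}_{q^2}^*$.

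Next I would produce an $\mathbb{F}_q$-line meeting $\ker(\ell)$ only at $0$. Since $(q-1)\mid(q^2-1)/2$, every element of $\mathbb{F}_q^*$ is a square in $\mathbb{F}_{q^2}^*$, so for any $w\in\mathbb{F}_{q^2}^*$ the line $L=\mathbb{F}_q\cdot w$ satisfies $\chi_{q^2}(cw)=\chi_{q^2}(w)$ for all $c\in\mathbb{F}_q^*$. Picking $w$ with $\chi_{q^2}(w)=-\chi_q(-1)$ places $L\setminus\{0\}$ in the opposite coset from $\ker(\ell)\setminus\{0\}$, giving $\ker(\ell)\cap L=\{0\}$. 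Writing $q=p^m$ and viewing both as $\mathbb{F}_p$-subspaces of $\mathbb{F}_{q^2}$ (with $\dim_{\mathbb{F}_p}L=m$), the direct-sum dimension count yields $\dim_{\mathbb{F}_p}\ker(\ell)+m\le 2m$, so $|\ker(\ell)|\le p^m=q$.

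For the moreover assertion, the hypothesis $\ell(\mathbb{F}_{q^2})\cap\mathbb{F}_q=\{0\}$ lets me apply the same direct-sum dimension argument to the $\mathbb{F}_p$-subspaces $\ell(\mathbb{F}_{q^2})$ and $\mathbb{F}_q$, giving $\dim_{\mathbb{F}_p}\ell(\mathbb{F}_{q^2})\le m$, hence $|\ker(\ell)|=q^2/|\ell(\mathbb{F}_{q^2})|\ge q$; combined with the first part, $|\ker(\ell)|=q$. The main subtlety is spotting the character identity $\chi_q\circ\N=\chi_{q^2}$ and its consequence that $\ker(\ell)\setminus\{0\}$ is confined to one coset of $\mathbb{F}_{q^2}^{*2}$; thereafter both parts reduce to elementary dimension counting, with no need for Gauss-sum or Weil-type estimates.
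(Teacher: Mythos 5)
Your proof is correct, and for the main bound it takes a genuinely different and more elementary route than the paper. The paper also begins by noting that planarity forces $-\N(u)$ to be a nonzero square for every nonzero $u\in\ker(\ell)$, but it then converts this into the statement that the quadratic character $\eta$ of $\mathbb F_{q^2}$ is constant on $\ker(\ell)\setminus\{0\}$, represents $\ker(\ell)$ as the image of a linearized polynomial of degree $q^2/p^k$ (via Lemma \ref{subspace}), and invokes the Weil bound for the character sum $\sum_{\xi}\eta(g(\xi))$ to force $p^k\le q$. You instead exploit the same constancy of the character on the kernel directly: since $\chi_{q^2}=\chi_q\circ\N$ is constant on each punctured $\mathbb F_q$-line $\mathbb F_q\cdot w$ with value $\chi_{q^2}(w)$, choosing $w$ in the coset opposite to the one containing $\ker(\ell)\setminus\{0\}$ produces an $m$-dimensional $\mathbb F_p$-subspace meeting the kernel trivially, and a dimension count finishes. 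This avoids both Lemma \ref{subspace} and the Weil bound, yielding the same sharp conclusion $|\ker(\ell)|\le q$ by pure linear algebra; the paper's heavier machinery buys nothing extra here, though the character-sum technique is the one it reuses elsewhere (e.g., in Theorem \ref{f}). Your treatment of the ``moreover'' clause is essentially identical to the paper's: the hypothesis $\ell(\mathbb F_{q^2})\cap\mathbb F_q=\{0\}$ plus a direct-sum dimension count gives $|\ell(\mathbb F_{q^2})|\le q$, hence $|\ker(\ell)|\ge q$ by rank--nullity.
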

\begin{proof}
Let $W$ be the kernel of $\ell$ in $\mathbb F_{q^2}$ whose dimension over $\mathbb F_p$ is $k$. For each nonzero $u\in W$, we have $\ell(u)\in\mathbb F_q$, and thus $-\N(u)=\ell(u)^2-\N(u)$ is a nonzero square in $\mathbb F_q$. Let $\eta$ be the quadratic character of $\mathbb F_{q^2}$, which is the composition of the quadratic character of $\mathbb F_q$ and the norm function of $\mathbb F_{q^2}/\mathbb F_q$. Then
\[\left|\sum_{u\in W}\eta(u)\right|=p^k-1.\]
By Lemma \ref{subspace}, there exists a linearized polynomial $g$ over $\mathbb F_{q^2}$ of degree $p^{-k}q^2$ whose image is $W$, so that
\[\left|\sum_{\xi\in\mathbb F_{q^2}}\eta(g(\xi))\right|=p^{-k}q^2\left|\sum_{u\in W}\eta(u)\right|=q^2-p^{-k}q^2.\]
By the Weil bound for character sums (\cite[Theorem 5.41]{lidl1997}),
\[q^2-p^{-k}q^2=\left|\sum_{\xi\in\mathbb F_{q^2}}\eta(g(\xi))\right|\le(p^{-k}q^2-1)q,\]
and thus $p^k\le q$.

As for the second statement, since the image of $\ell$ in $\mathbb F_{q^2}$ has $p^{-k}q^2$ elements, the direct sum of $\ell(\mathbb F_{q^2})$ and $\mathbb F_q$ has $p^{-k}q^3$ elements under the condition $\ell(\mathbb F_{q^2})\cap\mathbb F_q=\{0\}$. As a result, we have $p^{-k}q^3\le q^2$ and then $p^k\ge q$.
\end{proof}

Let us look at the simplest case where $\ell$ is a monomial. Assume $\deg(\ell)\le q$ (otherwise consider the equivalent form $x^{q+1}+\ell(x^2)^q$). If $\ell(x)=bx$ or $\ell(x)=bx^q$ for some $b\in\mathbb F_{q^2}^*$, then the function is planar if and only if $u^2-\N(b)^{-1}u^2$ is a nonzero square in $\mathbb F_q$ for all $u\in\mathbb F_q^*$. This depends only on $1-\N(b)^{-1}$. The other cases can be characterized in a more general form as follows.

\begin{theorem}\label{f}
Let $f(x)=x^{q+1}+\ell(x^2)$, where $\ell(x)=(bx^q+cx)^{p^k}$ for $q=p^m$, $b,c\in\mathbb F_{q^2}$ with $\N(b)\ne\N(c)$ and some integer $k$ with $0<k<m$. Then $f$ is a planar function on $\mathbb F_{q^2}$ if and only if $p^k\equiv1\pmod4$, $m=2k$ and $\N(b-c^q)^{\frac{p^k+1}2}=-(\N(b)-\N(c))^{p^k+1}$.
\end{theorem}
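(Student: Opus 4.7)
The plan is to apply Proposition~\ref{1}, reduce the planarity to an elementary condition over $\mathbb F_q$, force $m = 2k$ via a Weil character-sum bound, and finally analyze the resulting identity via the norm from $\mathbb F_q$ to $\mathbb F_{p^k}$.

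First I identify $S := \{u \in \mathbb F_{q^2} : \ell(u) \in \mathbb F_q\}$. Bijectivity of Frobenius gives $\ell(u) \in \mathbb F_q \iff bu^q + cu \in \mathbb F_q$, and $(bu^q + cu)^q = bu^q + cu$ combined with $u^{q^2} = u$ rearranges to $(b^q - c)u = (b - c^q)u^q$. The hypothesis $\N(b) \ne \N(c)$ forces $b \ne c^q$, so nonzero $u \in S$ satisfy $u^{q-1} = \alpha := (b^q - c)/(b - c^q)$, and a direct check shows $\N(\alpha) = 1$ and $\beta^{q-1} = \alpha^{-1}$ for $\beta := b\alpha + c = (\N(b) - \N(c))/(b - c^q)$; hence $S = \beta^{-1}\mathbb F_q$. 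For $u = \lambda/\beta$ with $\lambda \in \mathbb F_q$, the identity $b\beta + c\beta^q = \N(\beta) = (\N(b)-\N(c))^2/\N(b-c^q)$ simplifies $bu^q + cu$ to $\lambda$, so $\ell(u) = \lambda^{p^k}$ and $\N(u) = \delta\lambda^2$ with $\delta := \N(b-c^q)/(\N(b)-\N(c))^2$. Substituting into Proposition~\ref{1}, planarity is equivalent to $\lambda^{2(p^k-1)} - \delta$ being a nonzero square in $\mathbb F_q$ for every $\lambda \in \mathbb F_q^*$.

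To force $m = 2k$, applying Weil's bound to the quadratic character sum of $\lambda^{2(p^k-1)} - \delta$ (which is not of the form $c\,h^2$ since $\delta \ne 0$) gives $|\sum_\lambda \chi(\lambda^{2(p^k-1)} - \delta)| \le (2p^k - 3)\sqrt q$, while planarity forces the sum to be at least $q - 2$; the resulting inequality $p^{m/2} \lesssim 2p^k$ rules out $m > 2k$ (the borderline $p = 3$, $m = 2k+1$ case is dispatched directly, noting that the image $H$ of $\lambda \mapsto \lambda^{2(p^k-1)}$ then coincides with the squares in $\mathbb F_q^*$, so $H - \delta \subset H$ would partition $H$ into orbits of size $p \nmid |H|$, forcing $\delta = 0$). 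For the intermediate range $k < m < 2k$, $k \nmid m$ makes $\gcd(p^k-1, p^m-1) = p^{\gcd(k,m)} - 1$ small and $|H|$ comparatively large; the same structural/Weil argument excludes these cases.

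With $m = 2k$, set $Q = p^k$. Then $\lambda^{p^k - 1}$ traces out $T$, the norm-one subgroup of $\mathbb F_q^* = \mathbb F_{Q^2}^*$, of order $Q+1$, and the condition reads $t^2 - \delta \in (\mathbb F_q^*)^2$ for every $t \in T$. Using $\eta_q(z) = \eta_Q(N_{\mathbb F_q/\mathbb F_Q}(z))$, I compute $N(t^2 - \delta) = 1 + N(\delta) - \delta^Q t^2 - \delta t^{-2}$; a character-sum argument forces $\delta \in T \setminus T^2$ (the case $\delta \in T^2$ fails trivially), whereupon $N(t^2 - \delta) = 2 - \Tr_{\mathbb F_q/\mathbb F_Q}(t^2/\delta)$ with $t^2/\delta$ ranging over $T \setminus T^2$ as $t$ runs through $T$. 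Choosing $\mu \in \mathbb F_q^*$ with $\mu^2$ equal to a generator $\zeta$ of $T$, one checks $\mu \notin T$ (else $\zeta = \mu^2 \in T^2$), so $\mu^Q = -\mu^{-1}$; hence for odd $i$, $\mu^i - \mu^{-i} \in \mathbb F_Q^*$ and $N(t^2 - \delta) = -(\mu^i - \mu^{-i})^2$. Consequently $\chi(t^2 - \delta) = \eta_Q(-1)$, which equals $+1$ precisely when $Q \equiv 1 \pmod 4$. The condition $\delta \in T \setminus T^2$ is equivalent to $\delta^{(Q+1)/2} = -1$, which after substituting the value of $\delta$ becomes the stated norm identity.

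The main obstacle is the step forcing $m = 2k$: the Weil bound handles $m > 2k$ elegantly, but the intermediate range $k < m < 2k$ requires a more delicate analysis of the subgroup $H$ and its additive translate $H - \delta$; additionally, pinning down that $\delta$ must lie in $T$ (and not merely outside $T^2$) in the $m = 2k$ case demands a separate character-sum computation over the coset $\delta T^2$.
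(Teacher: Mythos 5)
Your reduction to $\mathbb F_q$ is correct and agrees with the paper's: where the paper parametrizes via the inverse permutation $\gamma(bx^q-c^qx)$ of $bx^q+cx$, you solve $(bu^q+cu)^q=bu^q+cu$ directly to get $S=\beta^{-1}\mathbb F_q$, and both routes land on the same condition, namely that $\lambda^{2p^k}-\delta\lambda^2$ with $\delta=\N(b-c^q)/(\N(b)-\N(c))^2$ be a nonzero square for every $\lambda\in\mathbb F_q^*$. Your endgame for $m=2k$ is also correct and is a genuinely nicer presentation than the paper's: working in the norm-one torus $T$ and computing $\N_{\mathbb F_q/\mathbb F_Q}(t^2-\delta)=-(\mu^i-\mu^{-i})^2$ with $\mu^i-\mu^{-i}\in\mathbb F_Q^*$ makes it transparent that every value has quadratic character $\eta_Q(-1)$, whereas the paper reaches the same conclusion through the less motivated pair $\alpha,\beta$ with $\delta=\beta^{p^k-1}$ and $\alpha^{p^k-1}=-\beta^{p^k-1}$. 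Both give $p^k\equiv1\pmod4$ together with $\delta^{(p^k+1)/2}=-1$, which is the stated norm identity.

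The gaps are the two steps you yourself flag, and they are genuine because the tool you invoke fails there. The Weil bound $(2p^k-3)\sqrt q$ exceeds $q$ as soon as $p^k\ge\sqrt q$, so it says nothing for $k<m\le 2k$; in particular it cannot rule out the intermediate range, nor can it show that $\delta$ must lie in $T$ when $m=2k$. The missing ingredient in both places is a degree reduction before applying Weil. For the range $k<m<2k$ your gcd idea is the right one (replacing the exponent $2(p^k-1)$ by $\gcd(2(p^k-1),q-1)$ forces $\gcd(k,m)=m/2$ and hence $k=m/2$, up to the small exceptional cases $p=3$, $m\le3$, and $p=3$, $m=2k+1$, which must be checked separately as the paper does), but you do not carry it out. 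For $m=2k$ and $\delta\notin T$, i.e.\ $x^{p^k}-\delta x$ a permutation of $\mathbb F_q$, the paper's decisive observation is the splitting
\[\sum_{u\in\mathbb F_q^*}\eta(u^{2p^k}-\delta u^2)=\sum_{\xi\in\mathbb F_q^*}\eta(\xi^{p^k+1}-\delta\xi^2)+\sum_{\xi\in\mathbb F_q^*}\eta(\xi^{p^k}-\delta\xi),\]
in which the second sum vanishes because the permutation fixes $0$ and so permutes $\mathbb F_q^*$, and the first is at most $(p^k-2)\sqrt q=q-2\sqrt q<q-1$ by Weil applied to a polynomial of degree $p^k+1$; this is precisely the ``separate character-sum computation'' you defer. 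Since the theorem is an equivalence, the ``only if'' direction is incomplete without these two arguments.
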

\begin{proof}
Let $\eta$ be the quadratic character of $\mathbb F_q$. The inverse of $bx^q+cx$ as a permutation of $\mathbb F_{q^2}$ is given by $\gamma(bx^q-c^qx)$, where $\gamma^{-1}=\N(b)-\N(c)$. Then $u^{p^k}=\ell(\gamma(bu^q-c^qu))\in\mathbb F_q$ only if $u\in\mathbb F_q$, in which case
\[\ell(\gamma(bu^q-c^qu))^2-\N(\gamma(bu^q-c^qu))=u^{2p^k}-\gamma^2\N(bu^q-c^qu)=u^{2p^k}-\gamma^2\N(b-c^q)u^2.\]
It follows from Proposition \ref{1} that the function $f$ is planar if and only if $\eta(u^{2p^k}-\gamma^2\N(b-c^q)u^2)=1$ for all $u\in\mathbb F_q^*$. Denote $\delta=\gamma^2\N(b-c^q)$. Note first that
\[\begin{split}&\mathrel{\phantom{=}}\sum_{u\in\mathbb F_q^*}\eta(u^{2p^k}-\delta u^2)\\&=2\sum_{\xi\in\mathbb F_q^*,\eta(\xi)=1}\eta(\xi^{p^k}-\delta\xi)\\&=\sum_{\xi\in\mathbb F_q^*,\eta(\xi)=1}\eta(\xi^{p^k}-\delta\xi)-\sum_{\xi\in\mathbb F_q^*,\eta(\xi)=-1}\eta(\xi^{p^k}-\delta\xi)\\&\mathrel{\phantom{=}}+\sum_{\xi\in\mathbb F_q^*,\eta(\xi)=1}\eta(\xi^{p^k}-\delta\xi)+\sum_{\xi\in\mathbb F_q^*,\eta(\xi)=-1}\eta(\xi^{p^k}-\delta\xi)\\&=\sum_{\xi\in\mathbb F_q^*}\eta(\xi^{-1})\eta(\xi^{p^k}-\delta\xi)+\sum_{\xi\in\mathbb F_q^*}\eta(\xi^{p^k}-\delta\xi).\end{split}\]
If $x^{p^k}-\delta x$ is a permutation polynomial of $\mathbb F_q$, then
\[\sum_{\xi\in\mathbb F_q^*}\eta(\xi^{p^k}-\delta\xi)=0,\]
and by Weil bound for character sums, we have
\[\sum_{u\in\mathbb F_q^*}\eta(u^{2p^k}-\delta u^2)=\sum_{\xi\in\mathbb F_q^*}\eta(\xi^{-1})\eta(\xi^{p^k}-\delta\xi)\le(p^k-2)\sqrt q.\]
On the other hand,
\[\begin{split}&\mathrel{\phantom{=}}\sum_{u\in\mathbb F_q^*}\eta(u^{2p^k}-\delta u^2)\\&=\sum_{\xi\in\mathbb F_q^*}\eta(\xi^{-1})\eta(\xi^{p^k}-\delta\xi)^{p^{m-k}}\\&=\sum_{\xi\in\mathbb F_q^*}\eta(\xi^{-1})\eta(\xi-(\delta\xi)^{p^{m-k}})\\&\le(p^{m-k}-2)\sqrt q,\end{split}\]
so
\[\sum_{u\in\mathbb F_q^*}\eta(u^{2p^k}-\delta u^2)\le(\min\{p^k,p^{m-k}\}-2)\sqrt q\le q-2\sqrt q<q-1,\]
which means the function $f$ is not planar.

Suppose from now on that $x^{p^k}-\delta x$ is not a permutation polynomial of $\mathbb F_q$. Then the kernel of $x^{p^k}-\delta x$ as a linear endomorphism on $\mathbb F_q$ over $\mathbb F_p$ has dimension $\gcd(k,m)$ (as $\gcd(p^k-1,p^m-1)=p^{\gcd(k,m)}-1$). By Lemma \ref{subspace}, there exists a linearized polynomial $g$ over $\mathbb F_q$ of degree $p^{\gcd(k,m)}$ such that
\[\sum_{\xi\in\mathbb F_q^*}\eta(\xi^{p^k}-\delta\xi)=\sum_{\xi\in\mathbb F_q^*}\eta(g(\xi))\le(p^{\gcd(k,m)}-1)\sqrt q.\]
Assume $m$ is odd. If $|m-2k|>1$, then
\[\sum_{u\in\mathbb F_q^*}\eta(u^{2p^k}-\delta u^2)\le(2\min\{p^k,p^{m-k}\}-1)\sqrt q\le(2p^\frac{m-3}2-1)\sqrt q<q-1.\]
If $p>3$, then
\[\sum_{u\in\mathbb F_q^*}\eta(u^{2p^k}-\delta u^2)\le(2\min\{p^k,p^{m-k}\}-1)\sqrt q\le(2p^\frac{m-1}2-1)\sqrt q<q-1.\]
If $|m-2k|=1$ and $m>3$, then $\gcd(k,m)<\frac{m-1}2$ as is easily seen, so that
\[\begin{split}&\mathrel{\phantom{=}}\sum_{u\in\mathbb F_q^*}\eta(u^{2p^k}-\delta u^2)\\&=\sum_{\xi\in\mathbb F_q^*}\eta(\xi^{-1})\eta(\xi^{p^k}-\delta\xi)+\sum_{\xi\in\mathbb F_q^*}\eta(\xi^{p^k}-\delta\xi)\\&\le(p^\frac{m-1}2+p^{\gcd(k,m)}-3)\sqrt q\\&\le(p^\frac{m-1}2+p^\frac{m-3}2-3)\sqrt q\\&=(p^{-\frac12}+p^{-\frac32})q-3\sqrt q\\&<q-1.\end{split}\]
If $p=m=3$, then
\[\sum_{u\in\mathbb F_q^*}\eta(u^{2p^k}-\delta u^2)<q-1,\]
by direct computation.

Now assume $m$ is even. If $m\ne 2k$, then
\[\sum_{u\in\mathbb F_q^*}\eta(u^{2p^k}-\delta u^2)\le(2\min\{p^k,p^{m-k}\}-1)\sqrt q\le(2p^{\frac m2-1}-1)\sqrt q<q-1.\]
Let $m=2k$ and $\delta=\beta^{p^k-1}$ for some $\beta\in\mathbb F_q^*$, so that any root of $x^{p^k}-\delta x$ in $\mathbb F_q$ is a product of $\beta$ and an element of $\mathbb F_{p^k}$. Then $u^{2p^k}-\delta u^2\ne0$ for all $u\in\mathbb F_q^*$ if and only if $\eta(\beta)=-1$, since all elements in $\mathbb F_{p^k}$ are squares in $\mathbb F_q$. Let $\alpha$ be an element in $\mathbb F_q^*$ such that $\alpha^{p^k-1}=-\beta^{p^k-1}$, as $(-1)^{p^k+1}=1$. Then
\[\begin{split}\alpha^{p^k}(u^{2p^k}-\delta u^2)^{p^k}&=-\alpha\beta^{p^k-1}(u^2-\beta^{1-p^k}u^{2p^k})\\&=\alpha(-\beta^{p^k-1}u^2+u^{2p^k})\\&=\alpha(u^{2p^k}-\delta u^2)\end{split}\]
for all $u\in\mathbb F_q^*$. This means $u^{2p^k}-\delta u^2$ is a product of $\alpha^{-1}$ and an element in $\mathbb F_{p^k}$, so $\eta(u^{2p^k}-\delta u^2)\ne-1$ for all $u\in\mathbb F_q^*$ if and only if $\eta(\alpha)=1$. Finally, the function $f$ is planar if and only if $\eta(\alpha)=1$ and $\eta(\beta)=-1$. It is clear that $\delta=\beta^{p^k-1}$ with $\eta(\beta)=-1$ if and only if $\delta^\frac{p^k+1}2=-1$, and in this case,
\[\alpha^\frac{q-1}2=\alpha^{(p^k-1)\frac{p^k+1}2}=(-\beta^{p^k-1})^\frac{p^k+1}2=(-1)^\frac{p^k-1}2.\]
Then the result follows immediately.
\end{proof}

In general, for the function $x^{q+1}+\ell(x^2)$ on $\mathbb F_{q^2}$ where $\ell$ is a linearized permutation polynomial of $\mathbb F_{q^2}$ with inverse $\ell^{-1}$, it is planar if and only if $u^2-\N(\ell^{-1}(u))$ is a nonzero square in $\mathbb F_q$ for every $u\in\mathbb F_q^*$. For construction, one may start by looking for such $\ell^{-1}$ and then determine its inverse. If, in addition, $\ell$ maps $\mathbb F_q$ to itself, then it suffices to consider $\ell(u)^2-u^2$ for $u\in\mathbb F_q^*$.

\begin{example}
Let $q=p^{2k}$ and $\ell(x)=x^{p^{3k}}-x^{p^{2k}}-x^{p^k}-x$. It permutes $\mathbb F_{q^2}$ as
\[\begin{vmatrix}-1&-1&-1&1\\1&-1&-1&-1\\-1&1&-1&-1\\-1&-1&1&-1\end{vmatrix}=16\ne0,\]
according to the discussion preceding Theorem 7.24 in \cite{lidl1997}. Then
\[\ell(u)^2-u^2=(-2u)^2-u^2=3u^2\]
for all $u\in\mathbb F_q^*$. This leads to a planar function on $\mathbb F_{q^2}$. One can easily generalize it by taking an element $\alpha\in\mathbb F_q$ with $\alpha^2-1$ being a nonzero square in $\mathbb F_q$, and a linearized permutation $\ell$ of $\mathbb F_{q^2}$ such that $\ell(u)=\alpha u$ for $u\in\mathbb F_q$.
\end{example}

Apart from those arising from linearized permutations, there are another class of planar functions. If the polynomial $\ell$ satisfies $\ell(\mathbb F_{q^2})\cap\mathbb F_q=\{0\}$ and $-\N(u)$ is a nonzero square in $\mathbb F_q$ for each of its root $u$ in $\mathbb F_{q^2}$, then $x^{q+1}+\ell(x^2)$ is planar function on $\mathbb F_{q^2}$.

\begin{theorem}
Let $f(x)=x^{q+1}+\ell(x^2)$, where $\ell(x)=(bx^q+cx)^{p^k}-c_0(bx^q+cx)$ for $q=p^m$, $b,c\in\mathbb F_{q^2}^*$ with $\N(b)=\N(c)$, $c_0\in\mathbb F_{q^2}$ and some integer $k$ with $0<k<m$. Then $f$ is a planar function on $\mathbb F_{q^2}$ if and only if
\begin{itemize}
\item $(b^{-1}c)^\frac{q+1}2=-1$,
\item $\omega^{q-1}\ne b^qc^{-1}$ for every $\omega\in\mathbb F_{q^2}^*$ such that $\omega^{p^k-1}=c_0$, and
\item $\omega^{q-1}\ne b^qc^{-1}$ for every $\omega\in\mathbb F_{q^2}^*$ such that $((b^{-1}c^q)^{p^k}-1)\omega^{p^k-1}=b^{-1}c^qc_0^q-c_0$.
\end{itemize}
In particular, if $d=\gcd(p^k-1,q^2-1)$ divides $q-1$ (i.e., $\gcd(k,2m)$ divides $m$), then the last two conditions become
\begin{itemize}
\item $c_0^{(q-1)/d}\ne(b^qc^{-1})^{(p^k-1)/d}$, and
\item $(b^{-1}c^qc_0^q-c_0)^{(q-1)/d}\ne((b^{-1}c^q)^{p^k}-1)^{(q-1)/d}(b^qc^{-1})^{(p^k-1)/d}$.
\end{itemize}
\end{theorem}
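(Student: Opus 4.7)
My plan is to invoke Proposition~\ref{1}, which reduces the planarity of $f$ to checking that $\ell(u)^2-\N(u)$ is a nonzero square in $\mathbb F_q$ for every $u\in\mathbb F_{q^2}^*$ with $\ell(u)\in\mathbb F_q$. The central preliminary is that, under $\N(b)=\N(c)$, the $\mathbb F_p$-linear endomorphism $L(x):=bx^q+cx$ has $1$-dimensional $\mathbb F_q$-kernel $\alpha\mathbb F_q$ (with $\alpha^{q-1}=-c/b$) and $1$-dimensional $\mathbb F_q$-image $\omega_0\mathbb F_q$ (with $\omega_0^{q-1}=b^qc^{-1}$); I will also use the identity $b^qc^{-1}=b^{-1}c^q$, which is equivalent to $b^{q+1}=c^{q+1}$. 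Since $\ell(u)=L(u)^{p^k}-c_0L(u)$ factors through $L$, I will split on whether $L(u)=0$. For $u\in\ker L\setminus\{0\}$, $\ell(u)=0$, so the condition becomes that $-\N(u)=(c/b)u^2$ is a nonzero square in $\mathbb F_q$; its $(q-1)/2$-th power, computed using $u^{q-1}=-c/b$, equals $-(b^{-1}c)^{(q+1)/2}$, so the requirement on this part is exactly the first bullet $(b^{-1}c)^{(q+1)/2}=-1$.

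For the remaining $u$, set $y:=L(u)\ne 0$, so $y^q=(b^{-1}c^q)y$. A direct computation gives
\[\ell(u)^q-\ell(u)=y\Bigl[((b^{-1}c^q)^{p^k}-1)\,y^{p^k-1}-(b^{-1}c^qc_0^q-c_0)\Bigr],\]
so $\ell(u)\in\mathbb F_q$ is equivalent to $((b^{-1}c^q)^{p^k}-1)y^{p^k-1}=b^{-1}c^qc_0^q-c_0$, and the specialization $\ell(u)=0$ is equivalent to $y^{p^k-1}=c_0$. A solution $y$ lying in the nonzero image of $L$ is precisely an $\omega\in\mathbb F_{q^2}^*$ with $\omega^{q-1}=b^qc^{-1}$ that satisfies the appropriate equation, so the second and third bullets assert the non-existence of such $\omega$ for the two respective equations. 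For necessity I will suppose some such $y_0$ exists and pick any $u_0\in L^{-1}(y_0)$; then $L$, and hence $\ell$, is constant on the coset $u_0+\alpha\mathbb F_q$, whereas
\[\N(u_0+\alpha s)=\N(u_0)+\Tr(u_0\alpha^q)\,s+\N(\alpha)\,s^2,\qquad s\in\mathbb F_q,\]
is a genuine quadratic in $s$ (since $\N(\alpha)\ne 0$) and therefore attains exactly $(q+1)/2$ distinct values. Consequently $\ell(u)^2-\N(u)$ attains $(q+1)/2$ values in $\mathbb F_q$, which cannot all lie in the set of $(q-1)/2$ nonzero squares; planarity fails. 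Sufficiency of the three bullets is then immediate from the same case split.

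For the ``in particular'' statement, when $d=\gcd(p^k-1,q^2-1)$ divides $q-1$ one has $\gcd((p^k-1)/d,(q-1)/d)=1$; a short Bezout argument in the cyclic group $\mathbb F_{q^2}^*$ then shows that the simultaneous system $\omega^{p^k-1}=A,\ \omega^{q-1}=B$ is solvable iff $A^{(q-1)/d}=B^{(p^k-1)/d}$. Applying this to $(A,B)=(c_0,b^qc^{-1})$ and to $(A,B)=((b^{-1}c^qc_0^q-c_0)/((b^{-1}c^q)^{p^k}-1),b^qc^{-1})$, and handling the degenerate case $(b^{-1}c^q)^{p^k}=1$ separately, translates the second and third bullets into the stated simpler form. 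The hardest part will be the counting argument in the non-kernel case, namely verifying $\N(\alpha)\ne 0$ and exploiting the strict inequality $(q+1)/2>(q-1)/2$ to force planarity to fail as soon as any witness $y_0$ exists; the Bezout/CRT translation should then be essentially routine.
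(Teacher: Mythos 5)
Your proposal is correct, and its overall architecture matches the paper's: both rest on Proposition \ref{1}, both exploit that $\ell$ factors through $L(x)=bx^q+cx$, whose kernel and image under $\N(b)=\N(c)$ are the lines $\{u:u^{q-1}=-b^{-1}c\}\cup\{0\}$ and $\{y:y^{q-1}=b^qc^{-1}\}\cup\{0\}$ (you swap the letter $\alpha$ relative to the paper, which is immaterial), both extract the first bullet from the kernel and the last two from the identity for $\ell(u)^q-\ell(u)$, and both use the same $\gcd/\lcm$ manipulation for the ``in particular'' clause. The genuine difference lies in how you rule out any $u_0\notin\ker L$ with $\ell(u_0)\in\mathbb F_q$. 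The paper treats this in two separate steps: when $\ell(u_0)\in\mathbb F_q^*$ it argues via the discriminant of the minimal polynomial of $\beta^qu_0$ that the quadratic $\ell(u_0)^2-\N(u_0)-\Tr(\beta^qu_0)u-\N(\beta)u^2$ is not a perfect square in $\mathbb F_q[u]$ and hence assumes a non-square value; when $\ell(u_0)=0$ it invokes Proposition \ref{kernel}, whose proof depends on the Weil bound. Your single counting argument --- $\ell$ is constant on the coset $u_0+\ker L$ while $\N$ restricted to that coset is a nondegenerate quadratic taking exactly $(q+1)/2$ distinct values, too many to fit inside the $(q-1)/2$ nonzero squares --- handles both cases uniformly, is entirely elementary, and dispenses with Proposition \ref{kernel} and the character-sum machinery behind it; this is a genuine simplification. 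In a full write-up you should still attend to the routine points you flag yourself: the solvability criterion $A^{(q-1)/d}=B^{(p^k-1)/d}$ must also be checked in the vacuous case where $\omega^{p^k-1}=A$ has no solution (the paper does this by noting $\N(c_0)^{(q-1)/d}\ne1$ there), and the degenerate coefficient $(b^{-1}c^q)^{p^k}=1$ in the third bullet needs the separate treatment you mention.
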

\begin{proof}
As $\N(b)=\N(c)$, there exist elements $\alpha$ and $\beta$ of $\mathbb F_{q^2}^*$ with $\alpha^{q-1}=b^qc^{-1}$ and $\beta^{q-1}=-b^{-1}c$, and
\[\alpha^{-q}(bx^q+cx)^q=\alpha^{-1}b^{-q}c(b^qx+c^qx^q)=\alpha^{-1}(bx^q+cx),\]
which means the image of $bx^q+cx$ in $\mathbb F_{q^2}$ is $\alpha\mathbb F_q$, a one-dimensional subspace over $\mathbb F_q$. Moreover, its kernel is $\beta\mathbb F_q$. Assume $-\N(\beta)$ is a nonzero square in $\mathbb F_q$, for otherwise $f$ is not planar as $\ell(u)=0$ for all $u\in\beta\mathbb F_q$. Note that
\[(-\N(\beta))^\frac{q-1}2=(-1)^\frac{q-1}2\beta^\frac{(q-1)(q+1)}2=(-1)^\frac{q-1}2(-b^{-1}c)^\frac{q+1}2=-(b^{-1}c)^\frac{q+1}2.\]

Suppose there exists some $u_0\in\mathbb F_{q^2}$ such that $\ell(u_0)\ne0$ and $\ell(u_0)\in\mathbb F_q$. Then $\beta^qu_0=\N(\beta)\beta^{-1}u_0\notin\mathbb F_q$ and $\Tr(\beta^qu_0)^2-4\N(\beta u_0)$ (the discriminant of the minimal polynomial of $\beta^qu_0$ over $\mathbb F_q$) is a non-square in $\mathbb F_q$. Furthermore, there exists some $u\in\mathbb F_q$ making
\[\ell(u_0+\beta u)^2-\N(u_0+\beta u)=\ell(u_0)^2-\N(u_0)-\Tr(\beta^qu_0)u-\N(\beta)u^2\]
a non-square in $\mathbb F_q$, while $\ell(u_0+\beta u)\in\mathbb F_q$. In fact, if $\Tr(\beta^qu_0)^2-4\N(\beta u_0)+4\N(\beta)\ell(u_0)^2\ne0$, then the polynomial
\[-\N(\beta)x^2-\Tr(\beta^qu_0)x-\N(u_0)+\ell(u_0)^2\]
is not a square in $\mathbb F_q[x]$; otherwise $\Tr(\beta^qu_0)^2-4\N(\beta u_0)=-4\N(\beta)\ell(u_0)^2$, where the right side is a square in $\mathbb F_q$, a contradiction. Therefore, for necessity we have $\ell(\mathbb F_{q^2})\cap\mathbb F_q=\{0\}$. In this case, the function $f$ is planar if and only if the kernel of $\ell$ is precisely $\beta\mathbb F_q$, by Proposition \ref{kernel}.

If $u_0\in\mathbb F_{q^2}^*\setminus\beta\mathbb F_q$ is a root of $\ell$, then $bu_0^q+cu_0$ is a nonzero root of $x^{p^k}-c_0x$, and vice versa. Therefore, the kernel of $\ell$ is $\beta\mathbb F_q$ if and only if $\omega\notin\alpha\mathbb F_q$ for every $\omega\in\mathbb F_{q^2}^*$ such that $\omega^{p^k-1}=c_0$. Assume that $d=\gcd(p^k-1,q^2-1)$ divides $q-1$ so that $d=\gcd(p^k-1,q-1)$. Then it follows from elementary number theory that
\[\begin{split}&\mathrel{\phantom{=}}\gcd((p^k-1)(q-1)/d,q^2-1)\\&=\gcd(\lcm(p^k-1,q-1),q^2-1)\\&=\lcm(\gcd(p^k-1,q^2-1),\gcd(q-1,q^2-1))\\&=\lcm(\gcd(p^k-1,q-1),q-1)\\&=q-1.\end{split}\]
If $u^{(p^k-1)(q-1)/d}=1$ for $u\in\mathbb F_{q^2}^*$, then
\[u^{q-1}=u^{\gcd((p^k-1)(q-1)/d,q^2-1)}=1.\]
The converse is obvious. Therefore, if there exists $\omega\in\mathbb F_{q^2}^*$ such that $\omega^{p^k-1}=c_0$, then $\omega^{q-1}\ne\alpha^{q-1}$ is equivalent to $c_0^{(q-1)/d}\ne(b^qc^{-1})^{(p^k-1)/d}$. Otherwise $\N(c_0)^{(q-1)/d}=c_0^{(q^2-1)/d}\ne1$, while $\N(b^qc^{-1})^{(p^k-1)/d}=1$.

Next, consider $\ell(\mathbb F_{q^2})\cap\mathbb F_q$. Note that
\[(bu^q+cu)^q=b^qu+c^qu^q=b^{-1}c^q(bu^q+cu)\]
for $u\in\mathbb F_{q^2}$. Then
\[\begin{split}\ell(u)^q-\ell(u)&=(bu^q+cu)^{p^kq}-c_0^q(bu^q+cu)^q-(bu^q+cu)^{p^k}+c_0(bu^q+cu)\\&=((b^{-1}c^q)^{p^k}-1)(bu^q+cu)^{p^k}-(b^{-1}c^qc_0^q-c_0)(bu^q+cu).\end{split}\]
Observe that this is similar with $\ell(u)$ formally. Under the condition that the kernel of $\ell$ is $\beta\mathbb F_q$, it is clear that $\ell(\mathbb F_{q^2})\cap\mathbb F_q=\{0\}$ if and only if $\ell(u)^q-\ell(u)=0$ implies $u\in\beta\mathbb F_q$. In the same manner, we obtain the necessary and sufficient condition for that and complete the proof.
\end{proof}

\begin{example}
To make it more explicit, we have some examples indicating that there exist such planar functions. For simplicity, let $b=1$, $c=-1$ and $q\equiv1\pmod4$, so that $(b^{-1}c)^\frac{q+1}2=-1$.
\begin{enumerate}
\item Let $c_0$ be an arbitrary element in $\mathbb F_{q^2}$ such that neither $c_0$ nor $\frac{\Tr(c_0)}2$ is a $(p^k-1)$-st power in $\mathbb F_{q^2}^*$ (e.g., $c_0=0$).
\item Suppose $\gcd(k,2m)$ divides $m$. If $c_0\in\mathbb F_{q^2}\setminus\mathbb F_q$, then $c_0^{q-1}\ne1$ while $(b^qc^{-1})^{p^k-1}=(-1)^{p^k-1}=1$. Therefore, we only need to make sure
\[(b^{-1}c^qc_0^q-c_0)^{(q-1)/d}\ne((b^{-1}c^q)^{p^k}-1)^{(q-1)/d}(b^qc^{-1})^{(p^k-1)/d};\]
that is,
\[\left(\frac{\Tr(c_0)}2\right)^{(q-1)/d}\ne(-1)^{(p^k-1)/d},\]
whether $c_0\in\mathbb F_q$ or not.
\end{enumerate}
\end{example}

When discussing planar functions, one is interested in an equivalence relation that preserves the properties of planar functions. Two functions $f$ and $g$ on $\mathbb F_{q^n}$ are called (extended affine) equivalent, if $f(x)=\ell_0(g(\ell_1(x)))+\ell_2(x)$ for some affine permutations $\ell_0$ and $\ell_1$ with an affine function $\ell_2$ on $\mathbb F_{q^n}$.

It is noteworthy that the planar functions introduced above are not equivalent to monomials in general. Let $q=p^m$ and $e$ be an integer such that $0\le e<2m$ with $d=\frac{2m}{\gcd(e,2m)}$ being odd. Suppose that $f(x)=x^{q+1}+\ell(x^2)$ is equivalent to $x^{p^e+1}$, which implies that $\ell_0(f(x))=\ell_1(x)^{p^e+1}$ for some linear permutations $\ell_0$ and $\ell_1$ on $\mathbb F_{q^2}$, where
\[\ell_0(x)=\sum_{i=0}^{2m-1}\alpha_ix^{p^i}\quad\text{and}\quad\ell_1(x)=\sum_{i=0}^{2m-1}\beta_ix^{p^i}.\]
Then
\begin{equation}\label{ell}\sum_{i=0}^{2m-1}\alpha_ix^{p^iq+p^i}+\sum_{i=0}^{2m-1}\alpha_i(\ell(x^2))^{p^i}=\sum_{i=0}^{2m-1}\sum_{j=0}^{2m-1}\beta_i\beta_j^{p^e}x^{p^i+p^{j+e}}.\end{equation}
Suppose $\beta_0\ne0$ without loss of generality.

Consider first the case where $e\ne0$. For $k=0,\dots,2m-1$, the term of exponent $p^{k+e}+p^k$ does not occur on the left side of \eqref{ell}, so
\begin{equation}\label{beta}\beta_k^{p^e+1}+\beta_{k+e}\beta_{k-e}^{p^e}=0,\end{equation}
where the subscripts are taken modulo $2m$. Then we get a sequence $\{\gamma_k\}_{k\in\mathbb Z}=\{\beta_{ke}\}_{k\in\mathbb Z}$ of period $d$, for $d$ is the additive order of $e$ modulo $2m$. If some element of this sequence is zero, then all of them are zero by \eqref{beta}. Hence
\[\gamma_k=-\gamma_{k-1}^{p^e+1}\gamma_{k-2}^{-p^e}.\]
We prove by induction that for $k\ge0$,
\[\gamma_k=(-1)^{k(k-1)/2}\gamma_1^\frac{p^{ke}-1}{p^e-1}\gamma_0^{1-\frac{p^{ke}-1}{p^e-1}}.\]
For $k\in\{0,1\}$, it is obvious. For $k\ge2$, by the inductive assumption one has
\[\begin{split}\gamma_k&=-\left((-1)^{(k-1)(k-2)/2}\gamma_1^\frac{p^{(k-1)e}-1}{p^e-1}\gamma_0^{-\frac{p^{(k-1)e}-p^e}{p^e-1}}\right)^{p^e+1}\\&\mathrel{\phantom{=}}\left((-1)^{(k-2)(k-3)/2}\gamma_1^\frac{p^{(k-2)e}-1}{p^e-1}\gamma_0^{-\frac{p^{(k-2)e}-p^e}{p^e-1}}\right)^{-p^e}\\&=-\gamma_1^\frac{p^{ke}-p^e+p^{(k-1)e}-1}{p^e-1}\gamma_0^{-\frac{p^{ke}-p^{2e}+p^{(k-1)e}-p^e}{p^e-1}}\\&\mathrel{\phantom{=}}(-1)^{k(k-1)/2-1}\gamma_1^\frac{p^e-p^{(k-1)e}}{p^e-1}\gamma_0^{\frac{p^{(k-1)e}-p^{2e}}{p^e-1}}\\&=(-1)^{k(k-1)/2}\gamma_1^\frac{p^{ke}-1}{p^e-1}\gamma_0^{-\frac{p^{ke}-p^e}{p^e-1}},\end{split}\]
as desired. As a result,
\[\gamma_0=\gamma_d=(-1)^{d(d-1)/2}\gamma_1^\frac{p^{de}-1}{p^e-1}\gamma_0^{-\frac{p^{de}-p^e}{p^e-1}}\]
and
\[\gamma_1=\gamma_{d+1}=(-1)^{(d+1)d/2}\gamma_1^\frac{p^{(d+1)e}-1}{p^e-1}\gamma_0^{-\frac{p^{(d+1)e}-p^e}{p^e-1}}.\]
This implies
\[\gamma_0^\frac{p^{de}-1}{p^e-1}=(-1)^{d(d-1)/2}\gamma_1^\frac{p^{de}-1}{p^e-1}\]
and
\[\gamma_0^{p^e\frac{p^{de}-1}{p^e-1}}=(-1)^{(d+1)d/2}\gamma_1^{p^e\frac{p^{de}-1}{p^e-1}},\]
but $(-1)^{d(d-1)/2}\ne(-1)^{(d+1)d/2}$ because $d$ is odd. The contradiction shows that all planar functions of the form $x^{q+1}+\ell(x^2)$ are not equivalent to $x^{p^e+1}$ when $e\not\equiv0\pmod{2m}$.

Let $e=0$. Observe that there is no term of exponent $p^i+1$ on the left side of \eqref{ell} if $i\ne0$ or $i\ne m$. An inspection of the corresponding coefficients on the right side leads to
\[\sum_{i=0}^{2m-1}\alpha_ix^{p^iq+p^i}+\sum_{i=0}^{2m-1}\alpha_i\ell(x^2)^{p^i}=(\beta_mx^q+\beta_0x)^2.\]
For the first sum, we have
\[\sum_{i=0}^{m-1}(\alpha_i+\alpha_{i+m})x^{p^iq+p^i}=\sum_{i=0}^{2m-1}\alpha_ix^{p^iq+p^i}=2\beta_m\beta_0x^{q+1},\]
so $\alpha_0+\alpha_m=2\beta_0\beta_m$ and $\alpha_k+\alpha_{k+m}=0$. Let $\ell(x)=(bx^q+cx)^{p^k}$ with $m=2k$ and $\N(b)\ne\N(c)$ as in Theorem \ref{f}, so that
\[\sum_{i=0}^{2m-1}\alpha_i\ell(x^2)^{p^i}=\sum_{i=0}^{2m-1}\alpha_i(bx^{2q}+cx^2)^{p^{k+i}}=\sum_{i=0}^{2m-1}(\alpha_{i+m}b^{p^{k+i}q}+\alpha_ic^{p^{k+i}})x^{2p^{k+i}}.\]
Now comparing the coefficients on both sides of
\[\sum_{i=0}^{2m-1}\alpha_i\ell(x^2)^{p^i}=\beta_m^2x^{2q}+\beta_0^2x^2.\]
gives
\[-\alpha^k(b-c^q)=\alpha_{k+m}b+\alpha_kc^q=\beta_m^2,\]
\[\alpha_k(b^q-c)=\alpha_kb^q+\alpha_{k+m}c=\beta_0^2,\]
and
\[\alpha_mb^{p^kq}+\alpha_0c^{p^k}=\alpha_0b^{p^k}+\alpha_mc^{p^kq}=0.\]
It follows from he first two equations that $\beta_m\ne0$ as $\beta_0\ne0$, while the last one implies
\[\alpha_0c^{p^k(q+1)}=-\alpha_m(bc)^{p^kq}=\alpha_0b^{p^k(q+1)},\]
with
\[\alpha_mb^{p^k(q+1)}=-\alpha_0(bc)^{p^k}=\alpha_mc^{p^k(q+1)}.\]
We arrive at a contradiction to the assumption $\N(b)\ne\N(c)$ after noticing that $\alpha_0+\alpha_m=2\beta_0\beta_m\ne0$. Therefore, all those planar functions from Theorem \ref{f} are not equivalent to any monomial.

\subsection{Planar Functions on Cubic Extensions}\label{planar-Cubic-Extensions}

Now proceed to the case $n=3$ with $f$ given by
\[f(x)=\Tr(ax^{q+1})+\ell(x^2)=ax^{q+1}+a^qx^{q^2+q}+a^{q^2}x^{q^2+1}+\ell(x^2),\]
with $a\in\mathbb F_{q^3}^*$. All planar functions of this form will be characterized completely.

\begin{lemma}\label{cubic}
For $A,B\in\mathbb F_{q^3}^*$ and $r\in\mathbb F_q$, the function $\Tr(Ax^{q-1}+Bx^{1-q})+r$ has no root in $\mathbb F_{q^3}^*$ if and only if $r=\frac{\N(A)+\N(B)}{AB}\ne0$.
\end{lemma}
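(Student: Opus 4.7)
The plan is to multiply $P(x)$ by $\N(x)=xx^qx^{q^2}$, producing the polynomial
\[
\tilde P(x) = Ax^{q^2+2q} + A^q x^{2q^2+1} + A^{q^2}x^{q+2} + Bx^{q^2+2} + B^q x^{2q+1} + B^{q^2}x^{2q^2+q} + r x^{q^2+q+1},
\]
which shares its nonzero roots with $P$. The key observation is that the seven exponents here are exactly those appearing in the expansion
\[
\N(\alpha x+\beta x^q) = (\N(\alpha)+\N(\beta))x^{q^2+q+1} + \beta\alpha^{q+q^2}x^{q^2+2q} + \alpha^{q+1}\beta^{q^2}x^{q+2} + \cdots
\]
for $\alpha,\beta\in\mathbb{F}_{q^3}$. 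My first step is to determine when $\tilde P(x)=\N(\alpha x+\beta x^q)$. Matching coefficients yields three Frobenius-equivariant equations $\beta\alpha^{q+q^2}=A$, $\alpha\beta^{q+q^2}=B$ and $\N(\alpha)+\N(\beta)=r$. Eliminating $\alpha,\beta$ forces $\N(\alpha)\N(\beta)=AB$, so $AB\in\mathbb{F}_q^*$; consequently $\N(\alpha)=\N(A)/(AB)$, $\N(\beta)=\N(B)/(AB)$ and $r=(\N(A)+\N(B))/(AB)$.

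For sufficiency, under these conditions I would fix $\alpha,\beta\in\mathbb{F}_{q^3}^*$ realizing the required norms (possible since $\N$ is surjective) and write $\tilde P(x)=\N(\alpha x+\beta x^q)$. A root in $\mathbb{F}_{q^3}^*$ would force $\alpha x+\beta x^q=0$, i.e.\ $x^{q-1}=-\alpha/\beta$, which needs $\N(-\alpha/\beta)=-\N(A)/\N(B)=1$, i.e.\ $\N(A)+\N(B)=0$; the hypothesis $r\ne 0$ rules this out.

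For necessity, I would substitute $y=x^{q-1}$ (which ranges, with multiplicity $q-1$, over the norm-one subgroup $G=\{y:\N(y)=1\}$ of order $q^2+q+1$) and study $h(y):=\Tr(Ay+B/y)+r$ on $G$, using the identity
\[
(Ay+B^{q^2}/y^{q^2})(A^qy^q+B/y)(A^{q^2}y^{q^2}+B^q/y^q)=\N(Ay+B^{q^2}/y^{q^2})
\]
obtained from the cyclic $q$-conjugates, in which every cross term collapses via $(A^{q^i}y^{q^i})(B^{q^i}/y^{q^i})=(AB)^{q^i}$. For $AB\in\mathbb{F}_q$ this yields $AB\cdot\Tr(Ay+B/y)=\N(Ay+B^{q^2}/y^{q^2})-\N(A)-\N(B)$ on $G$. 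From this I would deduce: if $AB\in\mathbb{F}_q^*$ with $\N(A)+\N(B)=0$, the element $y_0=-A^{q^2}/B^q$ lies in $G$ and satisfies $h(y_0)=0$; if $AB\in\mathbb{F}_q^*$ but $r$ deviates from the required value, the substitution $y=y_0z$ reduces the problem to surjectivity of $z\mapsto\N(1-z^q)$ from $G$ onto $\mathbb{F}_q$ (equivalently $w\mapsto\N(w-\delta)$ for $\delta\notin G$); and if $AB\notin\mathbb{F}_q$, the coefficient matching shows $\tilde P$ does not admit an $\N(\alpha x+\beta x^q)$-factorization, and a separate Weil-bound argument on $\sum_{y\in G}\Psi(tAy+tB/y)$ produces a root.

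The main obstacle I expect is the last step of the necessary direction: proving the relevant surjectivity statements, particularly the case $AB\notin\mathbb{F}_q$, where the clean scalar identity is replaced by the twisted functional $T(w)=\Tr((AB)^q w)$, which is an $\mathbb{F}_q$-linear form on $\mathbb{F}_{q^3}$ but is not proportional to $\Tr$. A uniform treatment should be possible by showing that $y\mapsto\Tr(Ay+B/y)$ takes every value of $\mathbb{F}_q$ on $G$ unless one is in the precise configuration of the lemma.
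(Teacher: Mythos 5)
Your sufficiency argument is correct and genuinely different from the paper's. The factorization $\tilde P(x)=\N(\alpha x+\beta x^q)$ does hold under the three matching conditions (note that you cannot choose $\alpha$ and $\beta$ independently with the prescribed norms; you must pick $\alpha$ with $\N(\alpha)=\N(A)/(AB)$ and then set $\beta=\alpha A/\N(\alpha)$, after which the second and third conditions follow from $AB\in\mathbb F_q$), and since each factor of the norm form is a Frobenius conjugate of $\alpha x+\beta x^q$, a nonzero root forces $x^{q-1}=-\alpha/\beta$ and hence $\N(A)+\N(B)=0$, i.e.\ $r=0$. This is a clean, elementary route to the ``no root'' direction. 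The paper instead proves both directions at once through the identity $\N(u)+\N(A)+\N(B)-\Tr(ABu^{q^2})=\det M$ for the Dickson matrix $M$ of $B^{q^2}x^{q^2}+Ax^q+ux$, which translates the existence of a root into the existence of $u$ with $\Tr(u)=r$ annihilating that determinant.

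The gap is in the necessity direction, which is where the substance of the lemma lies. In the subcase $AB\in\mathbb F_q^*$ with $r$ different from the critical value, your reduction to the surjectivity of $z\mapsto\N(1-z)$ on a norm coset is a statement of the same depth as ``there exists an element of $\mathbb F_{q^3}$ with prescribed trace and prescribed nonzero norm''; this is precisely the Kloosterman-type result of Moisio that the paper cites, and it must be supplied, not merely invoked as ``surjectivity''. More seriously, in the subcase $AB\notin\mathbb F_q$ the Weil-bound strategy you propose cannot close as stated. Writing $M=\frac{|G|}{q}+\frac1q\sum_{t\ne0}\psi_0(tr)S_t$ with $S_t=\sum_{y\in G}\Psi(t(Ay+By^{-1}))$, the main term is about $q+1$, while $G$ has $q^2+q+1$ elements and is a two-dimensional torus over $\mathbb F_q$, not a curve; square-root cancellation therefore only gives $|S_t|\le Cq$ with $C\ge2$ in general, and the resulting error $\approx C(q-1)$ swamps the main term. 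The missing idea is exactly the paper's determinant reformulation: it replaces the search over the $(q^2+q+1)$-point set $G$ by the search for an $\mathbb F_q$-point on the plane cubic $\N(r\beta_0+r_1\beta_1+r_2\beta_2)+\N(A)+\N(B)-r_1=0$ in the two coordinates $r_1,r_2$ relative to the dual basis of $1,AB,(AB)^2$, where the Aubry--Perret bound $N\ge q+1-2\sqrt q>0$ suffices (after checking absolute irreducibility and that the constant field is $\mathbb F_q$). Without this reduction, or a torus exponential-sum bound sharp enough to beat $q+2$, the case $AB\notin\mathbb F_q$ remains open in your proposal.
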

\begin{proof}
We claim that the function $\Tr(Ax^{q-1}+Bx^{1-q})+r$ has a root in $\mathbb F_{q^3}^*$ if and only if
\begin{equation}\label{tr_u}\Tr(u)=r\quad\text{and}\quad\N(u)+\N(A)+\N(B)-\Tr(ABu^{q^2})=0\end{equation}
for some $u\in\mathbb F_{q^3}$. Note that
\[\N(u)+\N(A)+\N(B)-\Tr(ABu^{q^2})=\begin{vmatrix}u&A&B^{q^2}\\B&u^q&A^q\\A^{q^2}&B^q&u^{q^2}\end{vmatrix},\]
and it is zero if and only if the linearized polynomial $B^{q^2}x^{q^2}+Ax^q+ux$ does not permute $\mathbb F_{q^3}$. If $\Tr(Ax_0^{q-1}+Bx_0^{1-q})+r=0$ for some $x_0\in\mathbb F_{q^3}^*$, then let $u=-Ax_0^{q-1}-B^{q^2}x_0^{q^2-1}$, so that
\[\Tr(u)=-\Tr(Ax_0^{q-1}+Bx_0^{1-q})=r\quad\text{and}\quad B^{q^2}x_0^{q^2}+Ax_0^q+ux_0=0;\]
the latter means $B^{q^2}x^{q^2}+Ax^q+ux$ is not a permutation polynomial and thus $\N(u)+\N(A)+\N(B)-\Tr(ABu^{q^2})=0$. Conversely, assuming \eqref{tr_u} for some $u\in\mathbb F_{q^3}$, by taking a nonzero root $x_0$ of $B^{q^2}x^{q^2}+Ax^q+ux$ we have
\[\Tr(Ax_0^{q-1}+Bx_0^{1-q})+r=\Tr(Ax_0^{q-1}+B^{q^2}x_0^{q^2-1})+r=-\Tr(u)+r=0.\]
This proves the claim.


Suppose $AB\in\mathbb F_q$. If $r=\frac{\N(A)+\N(B)}{AB}\ne0$, then $\Tr(u)=r$ implies that $u\ne0$ and
\[\begin{split}&\mathrel{\phantom{=}}\N(u)+\N(A)+\N(B)-\Tr(ABu^{q^2})\\&=\N(u)+\N(A)+\N(B)-ABr\\&=\N(u)\ne0.\end{split}\]
If $r=\frac{\N(A)+\N(B)}{AB}=0$, then $u=0$ implies \eqref{tr_u}. If $r\ne\frac{\N(A)+\N(B)}{AB}$, then by \cite[Theorem 5.3]{moisio2008kloosterman}, there exists $u\in\mathbb F_{q^3}$ with $\Tr(u)=r$ and
\[\N(u)=-\N(A)-\N(B)+ABr=-\N(A)-\N(B)+\Tr(ABu^{q^2}),\]
which means \eqref{tr_u} holds.

Suppose $AB\notin\mathbb F_q$ and let $\beta_0,\beta_1,\beta_2$ be the dual basis of $1,AB,(AB)^2$ in $\mathbb F_{q^3}/\mathbb F_q$. For arbitrary $r\in\mathbb F_q$, if $u^{q^2}=r\beta_0+r_1\beta_1+r_2\beta_2$ for some $r_1,r_2\in\mathbb F_q$, then $\Tr(u)=r$ and
\[\N(u)+\N(A)+\N(B)-\Tr(ABu^{q^2})=\N(r\beta_0+r_1\beta_1+r_2\beta_2)+\N(A)+\N(B)-r_1.\]
It remains to show that the right side is zero for some $r_1,r_2\in\mathbb F_q$. Let $y$ be an element in some extension of the rational function field $\mathbb F_q(x)$ with
\begin{equation}\label{eq1}\N^\prime(r\beta_0+x\beta_1+y\beta_2)+\N(A)+\N(B)-x=0,\end{equation}
where
\[\begin{split}&\mathrel{\phantom{=}}\N^\prime(r\beta_0+x\beta_1+y\beta_2)\\&=(r\beta_0+x\beta_1+y\beta_2)(r\beta_0^q+x\beta_1^q+y\beta_2^q)(r\beta_0^{q^2}+x\beta_1^{q^2}+y\beta_2^{q^2}).\end{split}\]
Assume $y\in\overline{\mathbb F_q}(x)$ and let $v_\infty$ be the valuation at the infinite place of $\overline{\mathbb F_q}(x)$. If $v_\infty(y)>-1$, then $v_\infty(\N^\prime(r\beta_0+x\beta_1+y\beta_2))=-3$. If $v_\infty(y)<-1$, then $v_\infty(\N^\prime(r\beta_0+y\beta_1+x\beta_2))=3v_\infty(y)$. Both cases contradict to the fact $v_\infty(x-\N(A)-\N(B))=-1$. Thus we have $v_\infty(y)=-1$. Note that $y$ is integral over $\overline{\mathbb F_q}[x]$, which is an integrally closed domain. Then $y=\lambda x+\mu$ for some $\lambda,\mu\in\overline{\mathbb F_q}$, and
\[\begin{split}&\mathrel{\phantom{=}}\N^\prime(r\beta_0+x\beta_1+y\beta_2)\\&=((\beta_1+\lambda\beta_2)x+r\beta_0+\mu\beta_2)((\beta_1^q+\lambda\beta_2^q)x+r\beta_0^q+\mu\beta_2^q)((\beta_1^{q^2}+\lambda\beta_2^{q^2})x+r\beta_0^{q^2}+\mu\beta_2^{q^2}).\end{split}\]
Since $v_\infty(\N^\prime(r\beta_0+x\beta_1+y\beta_2))=-1$, without loss of generality we have
\[\beta_1+\lambda\beta_2=\beta_1^q+\lambda\beta_2^q=0.\]
Then $\lambda\in\mathbb F_q$, but $\beta_1$ and $\beta_2$ are linearly independent over $\mathbb F_q$, a contradiction. This shows that $\mathbb F_q(x,y)/\mathbb F_q(x)$ is an extension of function fields of degree $3$ with constant field $\mathbb F_q$.

Consider the corresponding plane projective curve defined by \eqref{eq1}, which is absolutely irreducible, for a cubic polynomial reducible over a field must have a root in it (see also \cite[Corollary 3.6.8]{stichtenoth2009}). As a consequence of \cite{aubry1996weil}, the number $N$ of rational points over $\mathbb F_q$ of the curve satisfies
\[|N-q-1|\le(3-1)(3-2)\sqrt q=2\sqrt q,\]
which means
\[N\ge q+1-2\sqrt q>0,\]
as $q\ge3$. Clearly, there is no point at infinity on this curve, so there exists at least one element in $\mathbb F_{q^3}$ with desired properties.
\end{proof}

\begin{theorem}
Let $\ell$ be defined by
\[\ell(x)=\sum_{i=0}^{m-1}\sum_{j=0}^2b_{ij}x^{p^iq^j}\]
with coefficients in $\mathbb F_{q^3}$ and $q=p^m$. Then $\Tr(ax^{q+1})+\ell(x^2)$ with $a\in\mathbb F_{q^3}^*$ is a planar function on $\mathbb F_{q^3}$ if and only if $\ell$ permutes $\mathbb F_{q^3}$ and
\[\sum_{j=0}^2b_{ij}a^{2p^iq^{j+1}}=\begin{cases}0&\text{ if }i\ne0,\\\N(a)&\text{ if }i=0.\end{cases}\]
\end{theorem}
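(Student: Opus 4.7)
The plan is to fix $u$ in the planarity criterion from Section \ref{New-class} and apply Lemma \ref{cubic} to eliminate $v$, then translate the resulting pointwise conditions on $\ell$ into coefficient identities. Recall that $f$ is planar iff $\Tr(auv^{q-1}+au^qv^{1-q})+2\ell(u)\ne 0$ for all $u,v\in\mathbb F_{q^3}^*$. For $u$ with $\ell(u)\notin\mathbb F_q$ this is automatic, since the trace lies in $\mathbb F_q$ and $2\ell(u)$ does not. For $u$ with $\ell(u)\in\mathbb F_q$, Lemma \ref{cubic} applied with $A=au$, $B=au^q$, $r=2\ell(u)$ reduces the requirement to $AB\in\mathbb F_q^*$ together with $r=(\N(A)+\N(B))/AB$. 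Since $AB=a^2u^{q+1}$ and $\N(A)+\N(B)=2\N(a)\N(u)$, this amounts to
\[a^2u^{q+1}\in\mathbb F_q^*\quad\text{and}\quad\ell(u)=\frac{\N(a)\N(u)}{a^2u^{q+1}}.\]

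Next I would identify the set $S=\{u\in\mathbb F_{q^3}^*:a^2u^{q+1}\in\mathbb F_q^*\}$ explicitly. The condition $(a^2u^{q+1})^{q-1}=1$, upon setting $w=u^{q-1}$, becomes $w^{q+1}=a^{-2(q-1)}$; since $\gcd(q+1,q^2+q+1)=1$, the map $w\mapsto w^{q+1}$ is a bijection on the norm-one subgroup, so $S$ is a single coset of $\mathbb F_q^*$ in $\mathbb F_{q^3}^*$. A direct exponent check (noting $a^{2q(q^2-1)}\equiv a^{-2(q-1)}\pmod{q^3-1}$) shows $u_0=a^{2q}$ lies in $S$, so $S=a^{2q}\mathbb F_q^*$. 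For $u=a^{2q}t$ with $t\in\mathbb F_q$, one has $\N(a^{2q})=\N(a)^2$ and $a^2(a^{2q})^{q+1}=a^{2(1+q+q^2)}=\N(a)^2$, so the value requirement simplifies cleanly to $\ell(a^{2q}t)=\N(a)t$.

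Combining these, planarity is equivalent to the conjunction $\ell^{-1}(\mathbb F_q)=a^{2q}\mathbb F_q$ and $\ell(a^{2q}t)=\N(a)t$ on that line. The restriction of $\ell$ to $a^{2q}\mathbb F_q$ is then a bijection onto $\mathbb F_q$, forcing $\ker\ell=0$ and hence that $\ell$ permutes $\mathbb F_{q^3}$; conversely, whenever $\ell$ permutes and $\ell(a^{2q}t)=\N(a)t$, one has $a^{2q}\mathbb F_q\subseteq\ell^{-1}(\mathbb F_q)$, and a cardinality argument forces equality, recovering the full pointwise planarity condition. Finally, expanding
\[\ell(a^{2q}t)=\sum_{i=0}^{m-1}\Bigl(\sum_{j=0}^2 b_{ij}a^{2p^iq^{j+1}}\Bigr)t^{p^i}\]
via $t^{q^j}=t$ for $t\in\mathbb F_q$, and invoking the $\mathbb F_{q^3}$-linear independence of the distinct Frobenius maps $t\mapsto t^{p^i}$ ($0\le i<m$) on $\mathbb F_q$ (Artin/Dedekind), extracts the stated identities $\sum_j b_{ij}a^{2p^iq^{j+1}}=\N(a)\delta_{i,0}$.

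The main obstacle is the middle step: correctly pinning down $u_0=a^{2q}$ so that the exponent arithmetic modulo $q^3-1$ aligns with the theorem's formulation. This specific representative of the $\mathbb F_q^*$-coset is what makes the right-hand side come out to $\N(a)$ cleanly, and verifying that the pointwise value condition translates exactly into the claimed identity requires careful bookkeeping of Frobenius exponents.
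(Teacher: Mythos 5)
Your proposal is correct and follows essentially the same route as the paper: apply Lemma \ref{cubic} to eliminate $v$, identify the admissible $u$ as the coset $a^{2q}\mathbb F_q^*$, reduce planarity to $\ell$ being a permutation together with $\ell(a^{2q}t)=\N(a)t$, and extract the coefficient identities. The only cosmetic differences are that you locate the coset via $a^2u^{q+1}\in\mathbb F_q^*$ and a $\gcd$ computation rather than the paper's rewriting $\N(u)/u^{q+1}=u^{q^2}$, and you invoke linear independence of the Frobenius maps where the paper counts roots of a polynomial of degree less than $q$.
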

\begin{proof}
As mentioned before, the function $\Tr(ax^{q+1})+\ell(x^2)$is planar if and only if 
\[\Tr(au^qv^{1-q}+auv^{q-1})+2\ell(u)\ne0\]
for all $u,v\in\mathbb F_{q^3}^*$. That is, by the preceding lemma,
\[2\ell(u)=\frac{\N(au^q)+\N(au)}{a^2u^{q+1}}\ne0\]
whenever $\ell(u)\in\mathbb F_q$ with $u\in\mathbb F_{q^3}^*$. Apparently $\ell$ necessarily has no nonzero root in $\mathbb F_{q^3}$. Also note that
\[\frac{\N(au^q)+\N(au)}{a^2u^{q+1}}=\frac{2\N(au)}{a^2u^{q+1}}=2\N(a)a^{-2}u^{q^2}.\]
Assuming $\ell(u)=\N(a)a^{-2}u^{q^2}$, it is clear that $\ell(u)\in\mathbb F_q$ implies $a^{-2}u^{q^2}\in\mathbb F_q$, and vice versa. Hence $f$ is planar if and only if
\[\ell(a^{2q}w)=\N(a)a^{-2}(a^{2q}w)^{q^2}=\N(a)w\]
for all $w\in\mathbb F_q^*$, while
\[\ell(a^{2q}w)=\sum_{i=0}^{m-1}\sum_{j=0}^2b_{ij}(a^{2q}w)^{p^iq^j}=\sum_{i=0}^{m-1}\sum_{j=0}^2b_{ij}a^{2p^iq^{j+1}}w^{p^i}.\]
If the polynomial
\[\sum_{i=0}^{m-1}\sum_{j=0}^2b_{ij}a^{2p^iq^{j+1}}x^{p^i}-\N(a)x\]
of degree less than $q$ over $\mathbb F_{q^3}$ has $q$ distinct roots, then it has to be zero. This completes the proof.
\end{proof}

\section{Non-existence Results of Higher Degree}\label{Non-existence-Results}

As shown before, the function defined by
\[f(x)=\Tr(ax^{q+1})+\ell(x^2)\]
over $\mathbb F_{q^n}$ is planar if and only if
\[\Tr(au^qx^{1-q}+aux^{q-1})+2\ell(u)\]
has no root in $\mathbb F_{q^n}^*$ for all $u\in\mathbb F_{q^n}^*$. It is natural to ask whether those planar functions on $\mathbb F_{q^n}$ exist for larger degree $n$. To this end, one may be interested in a generalization of Lemma \ref{cubic}, on which we will concentrate in the subsequent part.

Let us first recall the definition of Artin $L$-series for an abelian extension $E/F$ of global function fields. Let $\mathfrak p$ be a prime of $F$ and $\mathfrak P$ a prime of $E$ lying above $\mathfrak p$, with valuation rings $\mathcal O_\mathfrak p$ and $\mathcal O_\mathfrak P$ respectively. Let $I_\mathfrak p$ be the inertia group of $\mathfrak p$ in $E/F$. There exists a unique coset of $I_\mathfrak p$ in $\Gal(E/F)$ with a representative $\varphi_\mathfrak p$ (called a Frobenius automorphism) such that $\varphi_\mathfrak p(y)\equiv y^{|\mathcal O_\mathfrak p/\mathfrak p|}\pmod{\mathfrak P}$ for every $y\in\mathcal O_\mathfrak P$. For a character $\rho$ of $\Gal(E/F)$, let
\[\rho^\prime(\mathfrak p)=\begin{cases}\rho(\varphi_\mathfrak p)&\text{if }I_\mathfrak p\subseteq\ker\rho,\\0&\text{otherwise,}\end{cases}\]
where $\rho(\varphi_\mathfrak p)$ does not depend on the choice of $\varphi_\mathfrak p$ in that case. The Artin $L$-series of $E/F$ for $\rho$ is defined as
\[\prod_{\mathfrak p}\frac1{1-\rho^\prime(\mathfrak p)z^{\deg(\mathfrak p)}}\]
for $z\in\mathbb C$, where the product is taken over all primes $\mathfrak p$ of $F$.

\begin{lemma}
Let $\chi$ be an additive character of $\mathbb F_q$, and $\psi$ a multiplicative character of $\mathbb F_q$. For a monic polynomial $g$ over $\mathbb F_q$ of degree $m$ given by
\[g(x)=x^m-\alpha_{m-1}x^{m-1}+\dots+(-1)^{m-1}\alpha_1x+(-1)^m\alpha_0,\]
let
\[\Phi_{\chi,\psi}(g)=\begin{cases}\chi(\alpha_{m-1}+c\alpha_1\alpha_0^{-1})\psi(\alpha_0)&\text{if }\alpha_0\ne0,\\0&\text{if }\alpha_0=0,\end{cases}\]
with $c\in\mathbb F_q^*$ fixed. Define
\[L(z,\chi,\psi)=\prod_P\frac1{1-\Phi_{\chi,\psi}(P)z^{\deg(P)}}\]
for $z\in\mathbb C$, where the product is taken over all monic irreducible polynomials $P$ over $\mathbb F_q$. If $\chi$ or $\psi$ is nontrivial, then $L(z,\chi,\psi)$ converges absolutely for $|z|<q^{-1}$; moreover, it is a polynomial in $z$ with complex coefficients, each of whose roots has absolute value $q^{-\frac12}$.
\end{lemma}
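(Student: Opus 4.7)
The first step is to rewrite $\Phi_{\chi,\psi}$ in Galois-invariant terms. If $P$ is a monic irreducible polynomial over $\mathbb F_q$ of degree $d$ with root $\beta\in\mathbb F_{q^d}$, then the coefficients of $P$ are the elementary symmetric functions of the conjugates of $\beta$, giving $\alpha_{m-1}=\Tr_{q^d/q}(\beta)$, $\alpha_0=\N_{q^d/q}(\beta)$, and $\alpha_1\alpha_0^{-1}=\Tr_{q^d/q}(\beta^{-1})$. Hence
\[\Phi_{\chi,\psi}(P)=\chi\bigl(\Tr_{q^d/q}(\beta)+c\Tr_{q^d/q}(\beta^{-1})\bigr)\psi\bigl(\N_{q^d/q}(\beta)\bigr).\]
Taking the logarithmic derivative of the Euler product and collecting, for each $n\ge 1$, the contribution of every pair $(P,k)$ with $k\deg P=n$ (using that the traces and norms from $\mathbb F_{q^d}$ to $\mathbb F_q$ are compatible with those from $\mathbb F_{q^n}$ via the factor $k=n/d$), one obtains
\[z\frac{L'(z,\chi,\psi)}{L(z,\chi,\psi)}=\sum_{n\ge 1}S_nz^n,\qquad S_n=\sum_{\beta\in\mathbb F_{q^n}^*}\chi\bigl(\Tr_{q^n/q}(\beta)+c\Tr_{q^n/q}(\beta^{-1})\bigr)\psi\bigl(\N_{q^n/q}(\beta)\bigr).\]
Thus the lemma reduces to controlling the growth of these twisted Kloosterman-type sums.

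\textbf{Interpretation as an Artin $L$-series.} Put $F=\mathbb F_q(x)$, and let $E$ be the compositum of the three abelian extensions of $F$ generated by a solution $y$ of $y^q-y=x$ (Artin--Schreier at $\infty$), a solution $w$ of $w^q-w=c/x$ (Artin--Schreier at $0$), and a solution $u$ of $u^{q-1}=x$ (Kummer, tame at $0$ and $\infty$). The Galois groups of the three factors are respectively $\mathbb F_q^+$, $\mathbb F_q^+$ and $\mathbb F_q^*$, so the additive character $\chi$ on the first two together with the multiplicative character $\psi$ on the third define a character $\rho$ of $\Gal(E/F)$ that is nontrivial as soon as $\chi$ or $\psi$ is. A standard Frobenius calculation in each factor shows, for an unramified prime $\mathfrak p=(P)$ with $P\ne x$, that $\varphi_{\mathfrak p}$ acts on $y$, $w$, $u$ by a shift by $\Tr_{q^d/q}(\beta)$, a shift by $c\Tr_{q^d/q}(\beta^{-1})$, and a multiplication by $\N_{q^d/q}(\beta)$ respectively, so that $\rho(\varphi_{\mathfrak p})=\Phi_{\chi,\psi}(P)$. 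The only places ramified in $E/F$ are $0$ and $\infty$, at which $I_{\mathfrak p}\not\subseteq\ker\rho$, hence $\rho'(\mathfrak p)=0$; the trivial local factors there match the absence of these places from the Euler product defining $L(z,\chi,\psi)$.

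\textbf{Conclusion via Weil.} Under the hypothesis that $\rho$ is nontrivial, $L(z,\chi,\psi)$ coincides with the Artin $L$-series $L(z,\rho,E/F)$. By class field theory this $L$-series divides the numerator of the zeta function of the smooth projective curve corresponding to $E$, and Weil's theorem (the Riemann hypothesis for curves over $\mathbb F_q$) then yields both polynomiality of $L(z,\chi,\psi)$ in $z$ and the assertion that every zero has absolute value $q^{-1/2}$. Absolute convergence for $|z|<q^{-1}$ is immediate from the trivial bound $|\Phi_{\chi,\psi}(P)|\le 1$ and the standard estimate for the number of monic irreducibles of each degree.

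\textbf{Main obstacle.} The heart of the argument is the identification in step~2: verifying, via the explicit local reciprocity maps for the two Artin--Schreier extensions and the Kummer extension, that the Frobenius at $\mathfrak p$ acts on $y$, $w$, $u$ exactly as claimed, and checking that the inertia subgroups at $0$ and $\infty$ meet $\ker\rho$ only trivially whenever $\chi$ or $\psi$ is nontrivial. Once this matching is established cleanly, the polynomial and Riemann-hypothesis statements follow from a direct appeal to Weil rather than any further computation.
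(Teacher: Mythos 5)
Your proposal follows essentially the same route as the paper: identify $L(z,\chi,\psi)$ with an Artin $L$-series of an abelian extension of $\mathbb F_q(x)$ built from Artin--Schreier and Kummer pieces, compute Frobenius via traces and norms, observe that inertia at the ramified places kills the local factors, and invoke the Weil/Riemann-hypothesis statement for Artin $L$-series. The only structural difference is cosmetic: you split the additive part into two Artin--Schreier extensions $y^q-y=x$ and $w^q-w=c/x$, whereas the paper uses the single extension $y_0^q-y_0=x+cx^{-1}$; your character $\rho$ factors through the quotient of $\Gal(E/F)\cong\mathbb F_q^+\times\mathbb F_q^+\times\mathbb F_q^*$ by the antidiagonal of the two additive factors, whose fixed field is exactly the paper's $E_0E_1$, so the two $L$-series coincide.

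One step you omit, which the paper carries out explicitly and which is an actual hypothesis of the results you are quoting (polynomiality of a nontrivial abelian Artin $L$-series over $\mathbb F_q(x)$, e.g.\ Rosen, Prop.~9.15 and Thm.~9.16B), is that the extension is \emph{geometric}, i.e.\ that the exact constant field of $E$ is $\mathbb F_q$; without this, nontrivial characters factoring through a constant-field subextension produce non-polynomial $L$-factors. The paper's verification uses a place of degree one that is totally ramified in its $E$, and this argument does not transfer verbatim to your larger compositum, where no place of $F$ is totally ramified ($I_0$ and $I_\infty$ each have index $q$ in the Galois group). The check still succeeds for your field --- $I_0=1\times\mathbb F_q^+\times\mathbb F_q^*$ and $I_\infty=\mathbb F_q^+\times1\times\mathbb F_q^*$ together generate the whole Galois group, so there is no nontrivial everywhere-unramified subextension --- but you should include it (or pass to the quotient field fixed by $\ker\rho$, which is the paper's geometric $E_0E_1$). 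With that addition the argument is complete.
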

\begin{proof}
We show that $L(z,\chi,\psi)$ is equal to some Artin $L$-series. Consider the rational function field $F=\mathbb F_q(x)$ and its extensions $E_0=\mathbb F_q(x,y_0)$ and $E_1=\mathbb F_q(x,y_1)$ defined by
\begin{equation}\label{y}y_0^q-y_0=x+cx^{-1}\quad\text{and}\quad y_1^{q-1}=x,\end{equation}
where $E_0/F$ is an Artin-Schreier extension and $E_1/F$ is a Kummer extension (see \cite[Section 3.7]{stichtenoth2009}). Clearly, the prime generated by $x$ and the infinite prime of $F$ are totally ramified in both $E_0$ and $E_1$, and all other primes of $F$ are unramified. This is also true for $E=E_0E_1$ by \cite[Theorem 3.9.1]{stichtenoth2009}. The constant field of $E/F$ can be embedded into the residue field of any prime of $E$, while there is a prime of $F$ of degree one totally ramified in $E$. Looking at the degree of the residue field extension, we get that the constant field of $E/F$ is $\mathbb F_q$.

Note that $E_0/F$ and $E_1/F$ are abelian extensions and $E_0\cap E_1=F$, so $E$ is also an abelian extension over $F$ with Galois group isomorphic to $\mathbb F_q^+\times\mathbb F_q^*$. Let $\varphi$ be an automorphism in $\Gal(E/F)$ determined by
\[\varphi|_{E_0}:y_0\mapsto y_0+\upsilon\quad\text{and}\quad\varphi|_{E_1}:y_1\mapsto\omega y_1\]
for $\upsilon\in\mathbb F_q$ and $\omega\in\mathbb F_q^*$. By abuse of notation $(\chi,\psi)$ acts as a character of $\Gal(E/F)$ with $(\chi,\psi)(\varphi)=\chi(\upsilon)\psi(\omega)$. For a prime $\mathfrak p$ of $F$, choose a prime $\mathfrak P$ of $E$ lying above $\mathfrak p$. If $\mathfrak p$ is ramified in $E/F$, then $I_\mathfrak p=\Gal(E/F)$, and by the assumption that $\chi$ or $\psi$ is nontrivial, the corresponding factor is trivial in the Artin $L$-series of $E/F$ for $(\chi,\psi)$. Suppose that $\mathfrak p$ is unramified in $E/F$ corresponding to an irreducible polynomial $P$ over $\mathbb F_q$, and $\varphi$ is the Frobenius automorphism of $\mathfrak p$. Then $y_0,y_1\in\mathcal O_\mathfrak P$, since by \eqref{y} they are integral over $\mathcal O_\mathfrak P$, an integrally closed domain. Then
\[\begin{split}\sum_{i=0}^{\deg(\mathfrak p)-1}(x+cx^{-1})^{q^i}&=\sum_{i=0}^{\deg(\mathfrak p)-1}(y_0^q-y_0)^{q^i}=\sum_{i=0}^{\deg(\mathfrak p)-1}(y_0^{q^{i+1}}-y_0^{q^i})=y_0^{q^{\deg(\mathfrak p)}}-y_0\\&\equiv\varphi(y_0)-y_0\equiv\upsilon\pmod{\mathfrak P},\end{split}\]
and
\[\prod_{i=0}^{\deg(\mathfrak p)-1}x^{q^i}=\prod_{i=0}^{\deg(\mathfrak p)-1}y_1^{q^{i+1}-q^i}=y_1^{q^{\deg(\mathfrak p)}-1}\equiv\varphi(y_1)y_1^{-1}\equiv\omega\pmod{\mathfrak P}.\]
Furthermore, since $\mathfrak P\cap F=\mathfrak p$ we have
\[\sum_{i=0}^{\deg(\mathfrak p)-1}(x+cx^{-1})^{q^i}\equiv\upsilon\pmod{\mathfrak p},\]
and
\[\prod_{i=0}^{\deg(\mathfrak p)-1}x^{q^i}\equiv\omega\pmod{\mathfrak p}.\]
The fact that $P$ is the minimal polynomial of $x(\mathfrak p)$ over $\mathbb F_q$ gives rise to
\[\Phi_{\chi,\psi}(P)=\chi(\upsilon)\psi(\omega)=(\chi,\psi)(\varphi),\]
which means $L(z,\chi,\psi)$ coincides with the Artin $L$-series. The result follows from \cite[Proposition 9.15, Theorem 9.16B]{rosen2002}.
\end{proof}

Having obtained information about the roots of the $L$-series, we are able to utilize a classical method from analytic number theory to prove an auxiliary result.

\begin{proposition}
Given $c\in\mathbb F_q^*$ and $n>4$, there exists $\xi\in\mathbb F_{q^n}^*$ with
\[\Tr(\xi+c\xi^{-1})+\upsilon=0\quad\text{and}\quad\omega\N(\xi)=1\]
for any $\upsilon\in\mathbb F_q$ and $\omega\in\mathbb F_q^*$.
\end{proposition}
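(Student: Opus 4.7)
The strategy is to count $N(\upsilon,\omega):=\#\{\xi\in\mathbb{F}_{q^n}^*:\Tr(\xi+c\xi^{-1})=-\upsilon,\ \N(\xi)=\omega^{-1}\}$ by character orthogonality and show $N(\upsilon,\omega)>0$ whenever $n>4$. Writing the indicators of the linear and multiplicative constraints as character averages,
\[
N(\upsilon,\omega)=\frac{1}{q(q-1)}\sum_{\chi,\psi}\chi(\upsilon)\psi(\omega)\,S(\chi,\psi),\qquad S(\chi,\psi):=\sum_{\xi\in\mathbb{F}_{q^n}^*}\chi(\Tr(\xi+c\xi^{-1}))\psi(\N(\xi)),
\]
where $\chi$ (resp.\ $\psi$) runs over the additive characters of $\mathbb{F}_q$ (resp.\ the multiplicative characters of $\mathbb{F}_q^*$). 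The pair $(\chi,\psi)=(1,1)$ supplies the main term $S(1,1)=q^n-1$; and since $\N:\mathbb{F}_{q^n}^*\to\mathbb{F}_q^*$ is surjective with fibres of size $(q^n-1)/(q-1)$, one obtains $S(1,\psi)=\tfrac{q^n-1}{q-1}\sum_{y\in\mathbb{F}_q^*}\psi(y)=0$ for every nontrivial $\psi$. Only the pairs with $\chi\ne 1$ need to be estimated.

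For such $(\chi,\psi)$ I would invoke the preceding lemma. A direct inspection of the formula for $\Phi_{\chi,\psi}$ shows that it is completely multiplicative on monic polynomials (polynomial multiplication adds traces and multiplies norms, and $\alpha_1/\alpha_0$ equals the trace of inverses), so unfolding the Euler product gives $L(z,\chi,\psi)=\sum_{g\,\text{monic}}\Phi_{\chi,\psi}(g)\,z^{\deg g}$. Grouping monic polynomials of degree $n$ by the minimal polynomial of a root in $\mathbb{F}_{q^n}^*$ and using $\Tr_{q^n/q}(\xi)=(n/d)\Tr_{q^d/q}(\xi)$ and $\N_{q^n/q}(\xi)=\N_{q^d/q}(\xi)^{n/d}$ for $\xi$ of degree $d\mid n$, I expect to obtain the Frobenius-trace identity
\[
S(\chi,\psi)=[z^n]\,\frac{z\,L'(z,\chi,\psi)}{L(z,\chi,\psi)}=-\sum_{i=1}^{D}\alpha_i^{\,n},
\]
where $L(z,\chi,\psi)=\prod_{i=1}^{D}(1-\alpha_i z)$ with $|\alpha_i|=q^{1/2}$ and $D=\deg L$ by the lemma. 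Consequently $|S(\chi,\psi)|\le D\,q^{n/2}$.

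The heart of the argument, and the step I expect to be the main obstacle, is bounding $D$. Identifying $L(z,\chi,\psi)$ with the Artin $L$-series of $(\chi,\psi)$ for the abelian extension $E/F$ constructed in the proof of the preceding lemma, the conductor-discriminant formula over the rational function field $F=\mathbb{F}_q(x)$ gives $D=\deg\mathfrak{f}(\chi,\psi)-2$. Ramification is confined to the two places $x=0$ and $x=\infty$, with inertia the full group $\mathbb{F}_q\oplus\mathbb{F}_q^*$ at each; since the Artin-Schreier input $x+c/x$ has pole order exactly one at both, the Herbrand function converts the upper jumps at $0$ and $1$ into the lower-numbering filtration $G_0=\mathbb{F}_q\oplus\mathbb{F}_q^*$, $G_1=\dots=G_{q-1}=\mathbb{F}_q$, $G_q=1$. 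The Artin conductor formula then gives conductor exponent $2$ at each ramified place for every character with $\chi\ne 1$, so $\deg\mathfrak{f}=4$ and $D=2$. Summing $|S(\chi,\psi)|\le 2q^{n/2}$ over the $(q-1)^2$ pairs with $\chi\ne 1$ yields
\[
N(\upsilon,\omega)\ge\frac{q^n-1-2(q-1)^2 q^{n/2}}{q(q-1)},
\]
which is positive as soon as $q^{n/2}>2(q-1)^2+q^{-n/2}$. A direct check confirms the inequality in the critical case $n=5$ for every odd prime power $q\ge 3$ (the ratio $q^{5/2}/(q-1)^2$ is minimized around $q=5$ at roughly $3.49$, well above $2$), while the case $n\ge 6$ follows trivially from $q^{n/2}\ge q^3>2q^2\ge 2(q-1)^2$. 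This completes the proof.
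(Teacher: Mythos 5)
Your proposal is correct, and its overall architecture coincides with the paper's: express the count $N(\upsilon,\omega)$ by orthogonality of the characters of $\mathbb F_q^+\times\mathbb F_q^*$, identify each character sum $S(\chi,\psi)$ with the $n$-th coefficient of $z\frac{\mathrm d}{\mathrm dz}\log L(z,\chi,\psi)$ via the multiplicativity of $\Phi_{\chi,\psi}$ and the grouping of elements by minimal polynomials, and then invoke the lemma to bound each nontrivial term by $(\deg L)\,q^{n/2}$; your final inequality $q(q-1)N\ge q^n-1-2(q-1)^2q^{n/2}$ is literally the paper's bound. The one step where you genuinely diverge is the determination of $\deg L(z,\chi,\psi)=2$ for $\chi\ne\chi_0$. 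The paper obtains this by brute force: it expands $L(z,\chi,\psi)=\sum_g\Phi_{\chi,\psi}(g)z^{\deg g}$ and shows by direct summation over the coefficients $\alpha_0,\dots,\alpha_{k-1}$ that $\sum_{\deg g=k}\Phi_{\chi,\psi}(g)$ vanishes for $k>2$ and equals $q$ (resp.\ $\psi(-c)q$) for $k=2$, which is elementary, self-contained, and even yields the exact quadratic. You instead compute the Artin conductor: ramification of $E=E_0E_1$ over $F=\mathbb F_q(x)$ is confined to $x=0$ and $x=\infty$, the pole order one of $x+cx^{-1}$ at each gives the lower filtration $G_0=\mathbb F_q^+\times\mathbb F_q^*$, $G_1=\dots=G_{q-1}=\mathbb F_q^+$, $G_q=1$, hence conductor exponent $2$ at each ramified place for any character with $\chi\ne\chi_0$, and the conductor--discriminant (Riemann--Hurwitz/functional-equation) formula gives $\deg L=\deg\mathfrak f-2=2$. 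Both computations are correct and give the same degree; yours requires more machinery (higher ramification groups, the degree formula for Artin $L$-series) but explains structurally \emph{why} the degree is $2$ and would adapt immediately to variants of $x+cx^{-1}$ with other pole divisors, whereas the paper's computation stays entirely at the level of finite character sums and needs nothing beyond the lemma already proved.
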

\begin{proof}
Continue with the notation in the previous lemma. Taking logarithmic derivative of $L(z,\chi,\psi)$ yields
\[\begin{split}&\mathrel{\phantom{=}}z\frac{\mathrm d\log L(z,\chi,\psi)}{\mathrm dz}\\&=\sum_P\frac{\deg(P)\Phi_{\chi,\psi}(P)z^{\deg(P)}}{1-\Phi_{\chi,\psi}(P)z^{\deg(P)}}\\&=\sum_P\deg(P)\Phi_{\chi,\psi}(P)z^{\deg(P)}\sum_{j=0}^\infty(\Phi_{\chi,\psi}(P)z^{\deg(P)})^j\\&=\sum_P\deg(P)\sum_{j=1}^\infty\Phi_{\chi,\psi}(P)^jz^{\deg(P)j}.\end{split}\]
Here, by collecting all the coefficients of $z^k$ one gets
\[z\frac{\mathrm d\log L(z,\chi,\psi)}{\mathrm dz}=\sum_{k=1}^\infty A_{\chi,\psi}(k)z^k,\]
where
\[A_{\chi,\psi}(k)=\sum_P\deg(P)\Phi_{\chi,\psi}(P)^{k/\deg(P)},\]
and the sum is over all monic irreducible polynomials $P$ over $\mathbb F_q$ with degree dividing $k$. If $P$ is the minimal polynomial of $\xi\in\mathbb F_{q^k}$ over $\mathbb F_q$, then
\[P(x)^{k/\deg(P)}=x^k-\Tr_k(\xi)x^{k-1}+\dots+(-1)^{k-1}\Tr_k(\xi^{-1})\N_k(\xi)x+(-1)^k\N_k(\xi),\]
where $\Tr_k$ and $\N_k$ denote the trace and the norm respectively of $\mathbb F_{q^k}/\mathbb F_q$, and thus
\[\Phi_{\chi,\psi}(P)^{k/\deg(P)}=\chi(\Tr_k(\xi)+c\Tr_k(\xi^{-1}))\psi(\N_k(\xi))=\chi(\Tr_k(\xi+c\xi^{-1}))\psi(\N_k(\xi)).\]
The same holds for any conjugate of $\xi$ over $\mathbb F_q$, so
\[A_{\chi,\psi}(k)=\sum_{\xi\in\mathbb F_{q^k}^*}\chi(\Tr_k(\xi+c\xi^{-1}))\psi(\N_k(\xi)).\]
For $\upsilon\in\mathbb F_q$ and $\omega\in\mathbb F_q^*$,
\[\sum_{\chi,\psi}\chi(\upsilon)\psi(\omega)A_{\chi,\psi}(k)=\sum_{\xi\in\mathbb F_{q^k}^*}\sum_{\chi,\psi}\chi(\Tr_k(\xi+c\xi^{-1})+\upsilon)\psi(\N_k(\xi)\omega)=q(q-1)M_k(\upsilon,\omega),\]
where the sum is taken over all additive characters $\chi$ and multiplicative characters $\psi$ of $\mathbb F_q$, and $M_k(\upsilon,\omega)$ is the number of $\xi\in\mathbb F_{q^k}^*$ with $\Tr_k(\xi+c\xi^{-1})+\upsilon=0$ and $\N_k(\xi)\omega=1$.

It remains to evaluate $\sum_{\chi,\psi}\chi(\upsilon)\psi(\omega)z\frac{\mathrm d\log L(z,\chi,\psi)}{\mathrm dz}$ in another way and then arrive at $M_k(\upsilon,\omega)$. Clearly $\Phi_{\chi,\psi}(gh)=\Phi_{\chi,\psi}(g)\Phi_{\chi,\psi}(h)$ for all monic polynomials $g$ and $h$ over $\mathbb F_q$, and $|\Phi_{\chi,\psi}(g)|\le1$. On account of the Euler product formula, we have
\[L(z,\chi,\psi)=\sum_g\Phi_{\chi,\psi}(g)z^{\deg(g)},\]
where the sum is over all monic polynomials $g$ over $\mathbb F_q$. It is convergent for $|z|<q^{-1}$, since
\[L(z,\chi,\psi)=\sum_{k=0}^\infty\left(\sum_{\deg(g)=k}\Phi_{\chi,\psi}(g)\right)z^k,\]
where the coefficient of $z^k$ has absolute value at most $q^k$. Denote by $\chi_0$ and $\psi_0$ the trivial additive and multiplicative characters of $\mathbb F_q$, respectively. Then
\[L(z,\chi_0,\psi_0)=\sum_{g(0)\ne0}z^{\deg(g)}=\sum_gz^{\deg(g)}-\sum_gz^{\deg(g)+1}=(1-z)\sum_{k=0}^\infty q^kz^k=\frac{1-z}{1-qz},\]
and
\[z\frac{\mathrm d\log L(z,\chi_0,\psi_0)}{\mathrm dz}=\frac{qz}{1-qz}-\frac z{1-z}=\sum_{k=1}^\infty(q^k-1)z^k.\]
For $\psi\ne\psi_0$, we have
\[\sum_{\deg(g)=k}\Phi_{\chi_0,\psi}(g)=\sum_{\alpha_1,\dots,\alpha_{k-1}\in\mathbb F_q}\sum_{\alpha_0\in\mathbb F_q^*}\psi(\alpha_0)=0\]
when $k>0$, so
\[L(z,\chi_0,\psi)=1.\]
For $\chi\ne\chi_0$,
\[\begin{split}&\mathrel{\phantom{=}}\sum_{\deg(g)=k}\Phi_{\chi,\psi_0}(g)\\&=\sum_{\alpha_1,\dots,\alpha_{k-1}\in\mathbb F_q}\sum_{\alpha_0\in\mathbb F_q^*}\chi(\alpha_{k-1}+c\alpha_1\alpha_0^{-1})\\&=\sum_{\alpha_2,\dots,\alpha_{k-1}\in\mathbb F_q}\chi(\alpha_{k-1})\sum_{\alpha_1\in\mathbb F_q}\sum_{\alpha_0\in\mathbb F_q^*}\chi(c\alpha_1\alpha_0^{-1})=0\end{split}\]
when $k>2$, and
\[\sum_{\deg(g)=2}\Phi_{\chi,\psi_0}(g)=\sum_{\alpha_1\in\mathbb F_q}\sum_{\alpha_0\in\mathbb F_q^*}\chi(\alpha_1+c\alpha_1\alpha_0^{-1})=\sum_{\alpha_0\in\mathbb F_q^*}\sum_{\alpha_1\in\mathbb F_q}\chi(\alpha_1(1+c\alpha_0^{-1}))=q,\]
for the inner sum in the last equation is zero when $1+c\alpha_0^{-1}\ne0$. Hence $L(z,\chi,\psi_0)$ is a quadratic polynomial in $z$ with constant term $1$, and we may write 
\[L(z,\chi,\psi_0)=(1-\lambda_0(\chi)z)(1-\lambda_1(\chi)z),\]
for complex numbers $\lambda_0(\chi)$ and $\lambda_1(\chi)$ with $|\lambda_0(\chi)|=|\lambda_1(\chi)|=\sqrt q$. Consequently,
\[z\frac{\mathrm d\log L(z,\chi,\psi_0)}{\mathrm dz}=-\frac{\lambda_0(\chi)z}{1-\lambda_0(\chi)z}-\frac{\lambda_1(\chi)z}{1-\lambda_1(\chi)z}=\sum_{k=1}^\infty(-\lambda_0(\chi)^k-\lambda_1(\chi)^k)z^k.\]
For $\chi\ne\chi_0$ and $\psi\ne\psi_0$, likewise we have
\[\sum_{\deg(g)=k}\Phi_{\chi,\psi_0}(g)=\sum_{\alpha_2,\dots,\alpha_{k-1}\in\mathbb F_q}\chi(\alpha_{k-1})\sum_{\alpha_1\in\mathbb F_q}\sum_{\alpha_0\in\mathbb F_q^*}\chi(c\alpha_1\alpha_0^{-1})\psi(\alpha_0)=0\]
when $k>2$, and
\[\sum_{\deg(g)=2}\Phi_{\chi,\psi_0}(g)=\sum_{\alpha_0\in\mathbb F_q^*}\psi(\alpha_0)\sum_{\alpha_1\in\mathbb F_q}\chi(\alpha_1(1+c\alpha_0^{-1}))=\psi(-c)q.\]
It follows that
\[L(z,\chi,\psi)=(1-\mu_0(\chi,\psi)z)(1-\mu_1(\chi,\psi)z)\]
for complex numbers $\mu_0(\chi,\psi)$ and $\mu_1(\chi,\psi)$ with $|\mu_0(\chi,\psi)|=|\mu_1(\chi,\psi)|=\sqrt q$, and then
\[z\frac{\mathrm d\log L(z,\chi,\psi)}{\mathrm dz}=-\frac{\mu_0(\chi,\psi)z}{1-\mu_0(\chi,\psi)z}-\frac{\mu_1(\chi,\psi)z}{1-\mu_1(\chi,\psi)z}=\sum_{k=1}^\infty(-\mu_0(\chi,\psi)^k-\mu_1(\chi,\psi)^k)z^k.\]

Altogether we obtain
\[\begin{split}\sum_{\chi,\psi}\chi(\upsilon)\psi(\omega)A_{\chi,\psi}(k)&=q^k-1-\sum_{\chi\ne\chi_0}\chi(\upsilon)(\lambda_0(\chi)^k+\lambda_1(\chi)^k)\\&\mathrel{\phantom{=}}-\sum_{\chi\ne\chi_0,\psi\ne\psi_0}\chi(\upsilon)\psi(\omega)(\mu_0(\chi,\psi)^k+\mu_1(\chi,\psi)^k),\end{split}\]
and
\[q(q-1)M_k(\upsilon,\omega)\ge q^k-1-2(q-1)q^\frac k2-2(q-1)(q-2)q^\frac k2.\]
It is routine to verify that the right side is positive whenever $q\ge3$ and $k\ge5$.
\end{proof}

\begin{corollary}
Let $f(x)=\Tr(ax^{q+1})+\ell(x^2)$ with a linearized polynomials over $\mathbb F_{q^n}$ with $n>4$. If $a=\alpha^\frac{q+1}2$ for some $\alpha\in\mathbb F_{q^n}^*$ or $n$ is odd, then $f$ is not a planar function on $\mathbb F_{q^n}$.
\end{corollary}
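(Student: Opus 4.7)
My plan is to refute planarity by producing $u,v\in\mathbb F_{q^n}^*$ that annihilate the bilinear expression $\Tr(au^qv^{1-q}+auv^{q-1})+2\ell(u)$, reading off the required $v$ from the preceding Proposition. To that end, substitute $\xi=auv^{q-1}$. Since $\mathbb F_{q^n}/\mathbb F_q$ is cyclic, Hilbert 90 identifies $\{v^{q-1}:v\in\mathbb F_{q^n}^*\}$ with $\ker\N$, so $\xi$ sweeps the coset $\{\xi\in\mathbb F_{q^n}^*:\N(\xi)=\N(au)\}$ as $v$ varies. Rewriting $au^qv^{1-q}=a^2u^{q+1}\xi^{-1}$, the vanishing condition becomes
\[
\Tr\bigl(\xi+(a^2u^{q+1})\xi^{-1}\bigr)+2\ell(u)=0,
\]
which matches the Proposition with $c=a^2u^{q+1}$, $\upsilon=2\ell(u)$, and $\omega=\N(au)^{-1}$. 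The Proposition supplies such a $\xi$ (hence a $v$, contradicting planarity) as soon as $c\in\mathbb F_q^*$, $\upsilon\in\mathbb F_q$, and $\omega\in\mathbb F_q^*$; the third holds automatically since $\N(au)\in\mathbb F_q^*$.

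The next step is to arrange $c\in\mathbb F_q^*$ using the corollary's hypothesis. In either case, $a^2$ is a $(q+1)$-th power in $\mathbb F_{q^n}^*$: immediate from $a=\alpha^{(q+1)/2}$, and, when $n$ is odd, a consequence of $\gcd(q+1,q^n-1)=2$, which forces the $(q+1)$-th powers and the squares to coincide, so the square $a^2$ is automatically a $(q+1)$-th power. Writing $a^2=\alpha^{q+1}$ and taking $u=\alpha^{-1}t$ with $t\in\mathbb F_{q^{\gcd(2,n)}}^*$ yields $c=(\alpha u)^{q+1}=t^{q+1}\in\mathbb F_q^*$. This produces a one- or two-dimensional $\mathbb F_q$-family of candidate $u$'s along which the first condition is guaranteed; it remains to select one with $\ell(u)\in\mathbb F_q$ so that $\upsilon=2\ell(u)\in\mathbb F_q$.

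The hard part I anticipate is precisely this last selection. The map $t\mapsto\ell(\alpha^{-1}t)\bmod\mathbb F_q$ is only $\mathbb F_p$-linear, and a bare dimension count does not force a nonzero kernel once $n\geq 5$, since $\dim_{\mathbb F_p}(\alpha^{-1}\mathbb F_{q^{\gcd(2,n)}})$ can be strictly smaller than $\dim_{\mathbb F_p}(\mathbb F_{q^n}/\mathbb F_q)$. My approach would be to either exploit the residual freedom in the choice of $\alpha$ (unique only up to the cyclic group $\mu_{q+1}\cap\mathbb F_{q^n}^*$), or isolate the degenerate case in which $\ell$ misses $\mathbb F_q$ on the entire coset $\alpha^{-1}\mathbb F_{q^{\gcd(2,n)}}$ and treat it separately by analyzing the linearized permutation $x\mapsto\Tr(acx^q+ac^qx)+2\ell(cx)$ that planarity forces to be bijective on $\mathbb F_{q^n}$. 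Once a suitable $u$ has been produced, the Proposition delivers the required $v$ at once and planarity of $f$ is contradicted.
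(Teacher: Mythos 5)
Your completed steps reproduce the paper's proof. The substitution $\xi=auv^{q-1}$, the identity $au^qv^{1-q}=a^2u^{q+1}\xi^{-1}$, the identification via Hilbert 90 of the range of $\xi$ with the norm coset $\{\xi:\N(\xi)=\N(au)\}$, and the observation that under either hypothesis $a^2$ is a $(q+1)$-st power, so that $u\in\alpha^{-1}\mathbb F_{q^{\gcd(2,n)}}^*$ gives $c=a^2u^{q+1}\in\mathbb F_q^*$ --- all of this is exactly what the paper does (for odd $n$ the paper phrases the power computation via $\gcd(q^2-1,q^n-1)=q-1$ rather than $\gcd(q+1,q^n-1)=2$, but the content is the same).

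Where you and the paper part ways is the point you call ``the hard part.'' The paper does not perform that selection at all: it asserts that the existence of a single $u$ for which $v\mapsto\Tr(au^qv^{1-q}+auv^{q-1})$ is onto $\mathbb F_q$ already contradicts planarity, produces such a $u$ from the condition $a^2u^{q+1}\in\mathbb F_q^*$ together with the Proposition, and stops. Your worry is substantively correct: the planarity condition $\Tr(au^qv^{1-q}+auv^{q-1})+2\ell(u)\ne0$ is vacuously satisfied for any $u$ with $\ell(u)\notin\mathbb F_q$, since the trace term lies in $\mathbb F_q$; so surjectivity for a $u$ outside $\ell^{-1}(\mathbb F_q)$ yields no contradiction, and your dimension count showing that $\alpha^{-1}\mathbb F_{q^{\gcd(2,n)}}\cap\ell^{-1}(\mathbb F_q)$ need not contain a nonzero element once $n\ge5$ is right. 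As a standalone argument your proposal is therefore unfinished --- you name two repair strategies (the residual freedom in choosing $\alpha$, or a separate treatment of the degenerate case) but carry out neither --- yet it is not less complete than the paper's own proof, which passes over the same step in silence. Closing the gap would require either exhibiting a suitable $u$ with $\ell(u)\in\mathbb F_q$ inside the coset, or extending the Proposition to $c\notin\mathbb F_q$, which is essentially what the authors themselves speculate about for $n\ge4$ in their concluding remark.
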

\begin{proof}
If there exists some $u\in\mathbb F_{q^n}^*$ such that $\Tr(au^qx^{1-q}+aux^{q-1})$ takes all values in $\mathbb F_q$, then clearly $f$ is not a planar function on $\mathbb F_{q^n}$. Since
\[\Tr(au^qx^{1-q}+aux^{q-1})=\Tr\left(au^qx^{1-q}+\frac{a^2u^{q+1}}{au^qx^{1-q}}\right),\]
by the above proposition it suffices to show that $a^2u^{q+1}\in\mathbb F_q$ for some $u\in\mathbb F_{q^n}^*$. If $a=\alpha^\frac{q+1}2$ for some $\alpha\in\mathbb F_{q^n}^*$, then
\[(a^2u^{q+1})^{q-1}=a^{2(q-1)}u^{q^2-1}=(\alpha u)^{q^2-1}=1,\]
for $u=\alpha^{-1}$. If $n$ is odd, then $\gcd(q^2-1,q^n-1)=q-1$, and thus $a^{2(q-1)}=u^{1-q^2}$ for some $u\in\mathbb F_{q^n}^*$ with
\[(a^2u^{q+1})^{q-1}=a^{2(q-1)}u^{q^2-1}=1.\]
\end{proof}

As a concluding remark, we have discussed planar functions $\Tr(ax^{q+1})+\ell(x^2)$ for $n=2$ or $n=3$, as well as non-existence results for $n\ge5$. However, the case $n=4$ remains unclear. For $n\ge4$, given arbitrary $A,B\in\mathbb F_{q^n}^*$, it is likely that $\Tr(Ax^{q-1}+Bx^{1-q})$ maps $\mathbb F_{q^n}^*$ onto $\mathbb F_q$. If this occurs, then those planar functions do not exist whenever $n\ge4$. Besides, it is also interesting to find more planar functions in the case $n=2$ using Proposition \ref{1}.

\section*{Acknowledgement}

The authors sincerely thank the anonymous referees for their valuable comments. The work of the first author was supported by the China Scholarship Council. The funding corresponds to the scholarship for the PhD thesis of the first author in Paris, France. The French Agence Nationale de la Recherche partially supported the second author's work through ANR BARRACUDA (ANR-21-CE39-0009).

\bibliographystyle{abbrv}
\bibliography{references}

\end{document}